\documentclass[a4paper, 12pt]{amsart}

\usepackage{latexsym,amscd,amssymb,amsmath,amsthm,comment,url,color,mathrsfs}
\usepackage{graphicx,xcolor}
\usepackage{pb-diagram,tikz}
\usetikzlibrary{positioning,arrows.meta,decorations.pathreplacing,decorations.pathmorphing}
\usepackage[all]{xy}

\title[Local cohomologies at ideals given by simplicial posets]{Toward the theory on local cohomologies at the ideals given by simplicial posets}

\author{Kosuke Shibata}
\address{National Institute of Technology, Yonago College, Yonago,Tottori, 683-8502, JAPAN}
\email{shibata@yonago.kosen-ac.jp}

\author{Kohji Yanagawa} 
\address{Department of Mathematics, 
Kansai University, Osaka, 564-8680, Japan.}
\email{yanagawa@kansai-u.ac.jp}

\dedicatory{Dedicated to the memory of Professor Mitsuyasu Hashimoto.}

\subjclass[2020]{13F55, 13C11, 	13D45}
\keywords{simplicial poset, local cohomology, injective envelope, dualizing complex}

\theoremstyle{plain}
  \newtheorem{thm}{Theorem}[section]
  \newtheorem{prop}[thm]{Proposition}
  \newtheorem{lem}[thm]{Lemma}

\theoremstyle{definition}
  \newtheorem{dfn}[thm]{Definition}
  \newtheorem{exmp}[thm]{Example}
\theoremstyle{remark}
  \newtheorem{rem}[thm]{Remark}
\let\opn\operatorname 

\def\@bothmode#1{\ifmmode #1\else $#1$\fi}
\def\@autopr#1{%
   \@chCount{#1}%
   \ifnum\@tempchn<2 #1\else (#1)\fi
}
\numberwithin{equation}{section}
\def\NN{\mathbb{N}} 
\def\ZZ{\mathbb{Z}} 

\def\m{\ideal{m}} 
\def\p{\ideal{p}} 
\def\bc{a} 
\def\chara{\operatorname{char}} 
\def\defopn#1{%
    \@xp\def\csname #1\endcsname{%
        \def\@tempopn{\opn{\csname the#1\endcsname}}%
        \@ifnextchar_{\@opform}{\@opform_{}}%
    }%
}
\let\ideal\mathfrak 
\def\theE{E}
\def\E{\@ifstar{{}^*\theE}{\theE}} 
\def\*Mod{{}^*\!\Mod}
\def\Mod{\operatorname{Mod}}

\def\<{\langle}
\def\>{\rangle}
\def\rank{\operatorname{rank}}
\def\hght{\operatorname{ht}}

\def\chara{\operatorname{char}}
\def\<{\langle}
\def\>{\rangle}

\def\DC{D^\bullet}

\def\fm{\mathfrak{m}}
\def\fp{\mathfrak{p}}
\def\fq{\mathfrak{q}}

\def\ba{\mathbf a}
\def\bb{\mathbf b}
\def\bc{\mathbf c}
\def\bd{\mathbf d}
\def\be{\mathbf e}
\def\b0{\mathbf 0}
\def\bt{\mathbf t}
\def\bw{\mathbf w}

\def\11{\mathbf 1}

\def\EE{{}^*\! E}

\def\wS{\widetilde{S}}

\def\wE{{}^*\!\widetilde{E}}

\def\EA{{}^*\! E_{S_x}(S_x)}
\def\wEA{{}^*\! E_{\wS_x}(\wS_x)}

\def\ESx{{}^*\! E_{S_{-x}}(K)}
\def\wESx{{}^*\! E_{\wS_{-x}}(K)}

\def\too{\longrightarrow}
\def\wt{\widetilde{t}}
\newcommand{\disp}{\displaystyle}
\usepackage{bm}

\def\Ass{\operatorname{Ass}}
\def\Ext{\operatorname{Ext}}
\def\Hom{\operatorname{Hom}}
\def\gHom{^*\!\Hom}
\def\gExt{^*\!\Ext}

\def\Spec{\operatorname{Spec}}

\def\wt{{\widetilde{t}}}

\def\wS{{\widetilde{S}}}

\def\fp{{\mathfrak{p}}}
\def\bfp{\overline{{\mathfrak{p}}}}

\def\Lf{\mathcal{L}}

\def\DC{D^\bullet}
\def\gD{{}^*\!D}
\def\DB{{}^*\! \DC}

\def\bt{\mathbf t}
\def\too{\longrightarrow}
\def\gE{{}^*\!E}
\def\gD{{}^*\! D}

\def\Im{\operatorname{Im}}
\def\Ker{\operatorname{Ker}}

\makeatother
\begin{document}
\begin{abstract}
For a {\it simplicial poset} $P$, Stanley assigned the face ring $A_P$, which is the quotient of the polynomial ring $S:=K[t_x \mid x \in P \setminus \{\widehat{0} \}]$ by the ideal $I_P$. This is a generalization of Stanley-Reisner rings, but $S$ and $A_P$ are not standard graded in this case, and $I_P$ is not  a monomial ideal.  
To establish the foundation of the theory on local cohomology $H_{I_p}^i(S)$ and its injective resolution, we give an explicit description of the graded injective envelope $\EE_S(S/\fp_x)$, where $\fp_x$ is the prime ideal associated with $x \in P$, and analyze their behavior in the graded dualizing complex. 
\end{abstract}

\maketitle

\section{Introduction}
Throughout the paper, let $P$ be a {\it simplicial poset}, that is, $P$ is a finite poset with the smallest element $\hat{0}$, and the subposet $\{ y \in P \mid y \leq x  \}$ is isomorphic to a boolean algebra for all $x \in P$. 
If we regard the set of faces of a simplicial complex as a poset by inclusions, it is simplicial. Moreover, any simplicial poset is isomorphic to the face poset of a particular class of regular cell complexes. 
Moreover, simplicial posets and their face rings play roles in toric topology (c.f. \cite{M05,MP06}). 

For $x_1, \ldots, x_m \in P$, $[x_1 \vee \cdots \vee x_m ]$ denotes the set of minimal elements of $\{ \, z \in P \mid z \geq x_1, \ldots, x_m \, \}$. 
If $[ x \vee y] \ne \emptyset$, then  
$\{ \, z \in P \mid z \leq x, y \, \}$ has the greatest element $x \wedge y$. 
Let us recall the construction  of Stanley \cite{St91}. 
For $P^*:=P\backslash \{\hat{0}\}$,   
let  $S := K[ \, t_x \mid x \in P^* \, ]$ be the polynomial ring over a field $K$. Set 
\begin{equation}\label{f_xy}
f_{x,y}:=t_xt_y - t_{x \wedge y} \sum_{z \in [x \vee y]} t_z
\end{equation}
for $x,y \in P^*$. Here we set  $t_{\hat{0}}=1$ for the convenience, and 
if $[x \vee y] = \emptyset$, we interpret that 
$f_{x,y} =t_xt_y$.     
Now consider the ideal  
$$I_P:= (\, f_{x,y} | \ x,y \in P^*   \, ) \subset S$$
and call $A_P:= S/I_P$ the {\it face ring} of $P$.  See Example~\ref{running} below for a concrete illustration. If the simplicial poset $P$ comes from a simplicial complex $\Delta$, then $A_P$ is isomorphic to the Stanley--Reisner ring of $\Delta$. 

Let $y_1, \ldots, y_n$ be the rank 1 elements of $P$. 
Then $S$ admits a $\ZZ^n$-grading with $\deg t_{y_i} =\be_i \in \NN^n$, where $\be_i$ denotes the $i$-th coordinate vector. Then $I_P$ is a $\ZZ^n$-graded ideal, and $A_P$ is a $\ZZ^n$-graded ring. 
This grading induces the $\ZZ$-grading of $S$, but then it is {\it not} standard (i.e., some variable has degree $\ge 2$). 
Moreover, we often have $\dim_K S_\ba \ge 2$ for $\ba \in \NN^n$. 
These features  make our grading far more involved
than the usual $\ZZ^n$-grading of $K[x_1, \ldots, x_n]$.

Let $(R, \fm)$ be a regular local ring of dimension $d$ containing a field, $I$ an ideal of $R$, 
and $A:=R/I$ the quotient ring. The local cohomology module $H_I^i(R)$ is very mysterious, but  the {\it Lyubeznik number} 
$$\lambda_{i,j}(A):= \mu_i(\fm,  H_I^{d-j}(R))$$
is always finite and depends only on the ring $A$ (\cite{L93,L97}). 
For a polynomial ring $T$ and a graded ideal $I$, $\lambda_{i,j}(T/I)$ is defined in the same way. 
While many excellent papers have been written on this notion, refined and detailed research has been done for the case of Stanley–Reisner ideals (i.e.,  squarefree monomial ideals). See \cite{Y01a,AV}.

In the present paper, we establish the foundation for the study on $H_{I_P}^i(S)$.  
Every $\ZZ^n$-graded prime ideal of $S$ containing $I_P$ is of the form $\fp_x:=(t_z \mid z \in P^* \, \text{with} \, z \not \le x)$ for some $x \in P$. Let $\EE_x$ be the $\ZZ^n$-graded injective envelope $\EE_S(S/\fp_x)$ of $S/\fp_x$. 
For the $\ZZ^n$-graded dualizing complex ${}^*\! D_S^\bullet$ of $S$, the complex $\Gamma^\bullet:=
\Gamma_{I_P}({}^*\! D_S^\bullet)$ is of the form 
$$\Gamma^{-i}=\bigoplus_{\substack{x \in P \\\rank x=i}} \EE_x,$$
and satisfies 
$$H^i(\Gamma^\bullet)\cong H_{I_P}^{i+N}(S)(-\bw),$$
where $N:=\dim S$ and $\bw:=\sum_{x \in P^*} \deg t_x \in \NN^n$. 

To develop the theory, we face the following difficulties. 
First, the structure of $\EE_x$ is considerably more complicated than in the case of Stanley–Reisner ideals.
Second, the set of $\mathbb{Z}^n$-graded $S$-homomorphisms $\EE_x \to \EE_{x'}$ is much harder to control than the classical case.

The organization of this paper is as follows.

\begin{itemize}
\item In \S 2, we give an explicit description of $\EE_x$ for $x \in P$. For example, if $x \in [y_1, \ldots, y_r]$, we have 
$$\EE_x=K[t_{y_1}^{\pm 1}, \ldots, t_{y_r}^{\pm 1}] \otimes_K 
K[t_z^{-1} \mid z \in P^* \setminus \{ y_1, \ldots, y_r\}].$$
The $S$-module structure is given via the ``comultiplication map" $S \ni t_z \longmapsto t_z\otimes 1 +1 \otimes t_z \in S \otimes_K S$ and the natural surjection $S \to S/\fp_x (\cong K[y_1, \ldots, y_r])$. 

\item In \S3, we define the {\it cleanness} for homomorphisms $\EE_x \to \EE_{x'}$. The cleanness depends on the choice of the $K$-bases (i.e., the sets of monomials) of $\EE_x$ and $\EE_{x'}$. If we fix the bases, a clean homomorhism is unique up to a  scalar multiplication.   
\item We show that the differential maps of $\Gamma^\bullet$ are clean, after suitable choice of bases (Theorem~\ref{diff are clean}).  
\end{itemize}

As an application of the present framework, in the forthcoming paper \cite{SY2}, we will  establish the formula 
$$\lambda_{i,j}(A_P)=\dim_K [\gExt_S^{n-i}(\gExt_S^{n-j}(A_P,S),S)]_{\mathbf{0}},$$
which generalizes the key result  \cite[Corollary 3.10]{Y01a} in the case of Stanley–Reisner ideals. We remark that, since $A_P$ is not standard graded, we cannot use \cite[Theorem~4.5]{DGN},  even if $\chara(K)>0$ (then $A_P$ is $F$-pure). Moreover, our method is  constructive. 
For example, we have a presentation 
$$0 \too H_{I_P}^i(S)(-\bw) \too \bigoplus_{x \in P} \EE_x^{l_x} \stackrel{\psi}{\too}  \bigoplus_{x \in P} \EE_x^{m_x},$$
where $\psi$ is clean. However, the proof requires totally different methods (e.g., a term order in the Laurent polynomial ring $S[t_{y_1}^{-1}, \ldots, t_{y_n}^{-1}]$), and we will  present it in a separate paper \cite{SY2}.


\section{Construction of multigraded injective envelope}
Let $P$ be a {\it simplicial poset}. 
For the definition of simplicial posets and related notions (especially the face ring $A_P$), see \S 1. 

\begin{exmp}\label{running}
The poset $P_1$ given by the following Hasse diagram is simplicial. We have $[y_1 \vee y_2]=\{x,z\}$, $y_1 \wedge y_2= \hat{0}$ and $[x \vee z]=\emptyset$ ($x \wedge z$ does not exist). 
The polynomials defined in \eqref{f_xy} are 
$f_{y_1, y_2}=t_{y_1}t_{y_2}-(t_x+t_z)$ and $f_{x,z}=t_xt_z$, and the face ring $A_P$ is given by 
$$S/I_{P_1}=K[t_{y_1}, t_{y_2}, t_x, t_y]/(t_{y_1}t_{y_2}-t_x-t_z, t_xt_z)$$
(note that, for a general $P$, if $x$ and $y$ are comparable, then $f_{x,y}=0$.)

\begin{center}
\begin{figure}[ht]
\begin{tikzpicture}[scale=0.7, every node/.style={circle, draw, fill=black, inner sep=0.7pt}]

\node[label=above:$P_1:$,color=white] (a) at (-3.5,0.6) {};
\node[label=above:$x$] (x) at (-2,2) {};
\node[label=above:$z$] (z) at (2,2) {};
\node[label=below:$y_1$] (y_1) at (-2,0) {};
\node[label=below:$y_2$] (y_2) at (2,0) {};
\node[label=below:$\hat{0}$] (0) at (0,-1.2) {};

\draw (x) -- (y_1);
\draw (x) -- (y_2);
\draw (z) -- (y_1);
\draw (z) -- (y_2);
\draw (y_1) -- (0);
\draw (y_2) -- (0);
\end{tikzpicture}
\end{figure}
\end{center}

\end{exmp}

Let $y_1, \ldots, y_n$ be the rank 1 elements of $P$. For simplicity, set $t_i:=t_{y_i}$. For $x \in P$ with $\rank x =1$, $t_x$ coincides with some $t_i$, but in some contexts we write $t_x$ as it is. 
We regard $S=K[t_x \mid x \in P^*]$ as a $\ZZ^n$-graded ring by $\deg x =\sum_{y_i \le x} \be_i \in \NN^n$ for each $x \in P^*$, where $\be_i \in \NN^n$ is the $i$-th coordinate vector.  Then $I_P$ is a $\ZZ^n$-graded ideal, and hence $A_P$ is a $\ZZ^n$-graded ring. 

There are infinitely many $\ZZ^n$-graded prime ideals of $S$ in general. For example, if $x \in[y_i \vee y_j]$ for $i \ne j$, then $(t_it_j-c x) \subset S$ is a $\ZZ^n$-graded prime ideal for $0 \ne c \in K$, and these are all distinct. However, the $\ZZ^n$-graded prime ideals of $S$ containing $I_P$ correspond to the $\ZZ^n$-graded prime ideals of $A_P$, which are discussed in \cite{Y11}. So these prime ideals are of the form 
$$\fp_x:=(t_z \mid z \not \le x)+I_P$$ for some $x \in P$. Note that $\fp_{\hat{0}}$ is the graded maximal ideal $\fm:=(t_z \mid z \in P^*)$. 
Set $$S_x:=S/\fp_x \cong K[t_i \mid y_i \le x]$$
(note that $S_{\hat{0}} =S/\fm \cong K$). 
For simplicity, we denote the image of $t_i$ for $y_i \le x$ in $S_x$ just by $t_i$, not something like $\overline{t_i}$. (The same is true for the other quotient ring $S_{-x}$ defined below.)

\begin{exmp}
Let $P_1$ be the simplicial poset given in  Example~\ref{running}. 
The $\ZZ^2$-grading of $S=K[t_1, t_2, t_x,t_z]$ is given by $\deg t_1=(1,0)$, $\deg t_2=(0,1)$, $\deg t_x =\deg t_z=(1,1)$, and the $\ZZ^2$-graded prime ideals containing $I_P$ are 
$\fp_x=(y_1y_2-x, z), \quad \fp_z=(y_1y_2-z, x)$,  $\fp_{y_1}=(y_2, x,z)$, $\fp_{y_2}=(y_1, x,z)$,  $\fp_{\hat{0}}=(y_1,y_2,x,z)$. 
The surjection $\pi_x: S \to S_x \cong K[y_1, y_2]$ is given by  $\pi_x(x)=t_1t_2$ and $\pi_x(z)=0$. Similarly, $\pi_z: S \to S_z \cong K[y_1, y_2]$ is given by  $\pi_x(x)=0$ and $\pi_x(z)=t_1t_2$. 
\end{exmp}

The classical paper \cite{GW} on $\ZZ^n$-graded rings and modules has been read as a reference for monomial ideals or affine semigroup rings in many cases. However, the results in Chapters 1 and 2 of this paper also cover our somewhat unusual case. The same is true for Chapters 13 and 14 of the relatively recent textbook \cite{BS}. 
The next paragraph is based on the beginning of p.243 of \cite{GW} and p. 256 of \cite{BS}. 

Let $\*Mod S$ be the category of $\ZZ^n$-graded $S$-modules and degree preserving $S$-homomorphisms between them. For $M \in \*Mod S$ and $\ba \in \ZZ^n$, 
$M(\ba)$ denotes the $\ZZ^n$-graded $S$-module which coincides with $M$ as the underlying $S$-module
and the grading is given by $M(\ba)_{\bb}=M_{\ba +\bb}$ for $\bb \in \ZZ^n$. For $M, N \in \*Mod S$ and $\ba \in \ZZ^n$, $[\gHom_S(M,N)]_\ba$ denotes the set of morphisms $M \to N(\ba)$ in $\*Mod S$. (In particular, $[\gHom_S(M,N)]_\b0$ is the set of morphisms $M \to N$ in $\*Mod S$.) Then
$$\gHom_S(M,N)=\bigoplus_{\ba \in \ZZ^n} [\gHom_S(M,N)]_\ba$$
has a natural $\ZZ^n$-graded $S$-module structure.
If $M$ is finitely generated, then the underlying module  of $\gHom_S(M,N)$ is isomorphic to $\Hom_S(M,N)$. 
The category $\*Mod S$ has enough projectives, and an indecomposable projective is isomorphic to $S(\ba)$ for some $\ba \in \ZZ^n$.  For $M, N \in \*Mod S$, we can define $[\gExt^i_S(M,N)]_\ba$ for $\ba\in \ZZ^n$, and  
$$\gExt^i_S(M,N)=\bigoplus_{\ba \in \ZZ^n} [\gExt^i_S(M,N)]_\ba \in \*Mod S$$ as before. Since $S$ is noetherian, if $M$ is finitely generated, then the underlying module  of $\gExt^i_S(M,N)$ is isomorphic to $\Ext^i_S(M,N)$. 

 For a $\ZZ^n$-graded prime ideal $\fp \subset S$, 
$\EE_S(S/\fp)$  denotes the injective envelope of $S/\fp$ in $\*Mod S$, or {\it *injective envelope} of $S/\fp$. 
 To clarify the terminology, we state the properties of $\EE_S(S/\fp)$ here. 
\begin{itemize}
\item[(1)] $\EE_S(S/\fp)$ contains $S/\fp$ 
as a submodule, and a {\it *essential extension} of $S/\fp$.  The latter condition means that a non-zero homogeneous element  $\alpha \in \EE_S(S/\fp)$ always satisfies $S \alpha \cap (S/\fp_x)\ne 0$.  
\item[(2)] $\EE_S(S/\fp)$ is injective in $\*Mod S$. 
\end{itemize}
For a general $\ZZ^n$-graded prime ideal $\fp$, it is hopeless  to give an ``explicit" description of $\EE_S(S/\fp)$. However, in this section, we will explicitly construct $\EE_S(S/\fp_x)$ for $x \in P$.

Recall that, for a general polynomial ring $S'=K[x_1, \ldots, x_m]$, the injective envelope $\EE_{S'}(K)$ of $K=S'/(x_1, \ldots, x_m)$ is isomorphic to Macaulay's inverse system $K[x_1^{-1}, \ldots, x_m^{-1}]$ regardless of the grading. Hence  $\EE_{\widehat{0}}=\EE_S(K)$ is isomorphic to $K[t_z^{-1} \mid z \in P^*]$. 
So we consider $x \in P^*$ with $r:= \rank_P x \ge 1$. We may assume that $\{ i \mid y_i \le x \}=\{1, \ldots, r \}$ (equivalently, $x\in [y_1 \vee \cdots \vee y_r]$). Note that $S_x\cong K[t_1, \ldots, t_r]$ now. 

The injective envelope $\EA$ of $S_x$ in  the category $\ZZ^r$-graded $S_x$-modules is isomorphic to the Laurent polynomial ring  $K[t_1^{\pm 1}, \ldots, t_r^{\pm 1}]$, which is a localization of (the $S_x$-module) $S_x$. 

Next, we set $P^*_{-x}:=P^* \setminus \{ y_1, \ldots, y_r \}$, and consider the quotient ring 
$$S_{-x} := S/(t_1, \ldots, t_r) \cong K[t_z \mid   z\in P^* _{-x}],$$
which inherits the $\ZZ^n$-grading from $S$. 
The injective envelope $\ESx$ of $K$  in  the category $\ZZ^n$-graded $S_{-x}$-modules is 
$K[t_z^{-1} \mid z \in P^*_{-x}]$. 

We can regard $\EA$ and $\ESx$ as $S$-modules through the canonical surjections $S \to S_x \, (=S/\fp_x)$ and $S \to S_{-x} \, (=S/(t_1, \ldots, t_r))$. 
As a candidate of the injective envelope $\EE_S(S_x)$ of $S_x \, (= S/\fp_x)$ in $\*Mod S$, we consider the  $S \otimes_k S$-module 
$$
\EE_x := \EA \otimes_K \ESx.  
$$
The set of elements of the form 
\begin{equation}\label{monomials in E_x} 
    \alpha = \prod_{i=1}^r t_i^{a_i} \otimes \prod_{z \in P^*_{-x}} t_z^{-a_z}
\end{equation}
for $(a_1, \ldots, a_r) \in \ZZ^r$ and $(a_z) \in \NN^{P^*_{-x}}$  
forms a $K$-basis of $\EE_x$. We call an element of this form a {\it monomial} of $\EE_x$. 
Through the ring homomorphism $\Delta: S \to  S \otimes_k S$ defined
by $$\Delta(t_z) = t_z \otimes 1 + 1 \otimes t_z$$ for all $z \in P^*$, we regard $\EE_x$ as an $S$-module. 
We can also regard $\EE_x$ as a (free) module over the Laurent polynomial ring $K[t_1^{\pm 1}, \ldots, t_r^{\pm 1}]$ by 
\begin{equation}\label{E_x as a module over Laurent}
\prod_{i=1}^r t_i^{b_i} \cdot \alpha = \prod_{i=1}^r t_i^{a_i+b_i} \otimes \prod_{z \in P^*_{-x}} t_z^{-a_z}
\end{equation}
for $\prod_{i=1}^r t_i^{b_i} \in K[t_1^{\pm 1}, \ldots, t_r^{\pm 1}]$ and the monomial $\alpha \in \EE_x$ in \eqref{monomials in E_x}. 

The $\ZZ^n$-grading of $\EE_x$ is given as follows. For a homogeneous element $u \in \EA$ (resp. $v \in \ESx$)
with $\deg_{\ZZ^r}(u)=(a_1, \ldots, a_r)$ (resp. $\deg_{\ZZ^n}(v)=(b_1, \ldots, b_r, b_{r+1}, \ldots, b_n)$), we have 
$$\deg(u \otimes v)=(a_1+b_1, \ldots, a_r+b_r, b_{r+1}, \ldots, b_n).$$

\begin{rem}\label{E_x rem}
(1) If $\rank  x \le 1$, the structure of $\EE_x$ is rather simple. 
In fact $\EE_{\hat{0}} \cong K[t_z^{-1}\mid z \in P^*]$, as we have mentioned above. Similarly, we have $\EE_{y_i} \cong K[t_i^{\pm 1}]\otimes_K K[t_z^{-1} |  z \in P^* \setminus \{ y_i \} ]$. 
These are special cases of the injective modules treated in  \cite[Proposition~3.1.3 (2)]{GW}. 

(2) $S=K[t_x \mid x \in P^* ]$ is the coordinate ring of an additive group $K^{\# P^*}$. The above ring homomorphism  
$\Delta: S \to S \otimes_K S$ is the {\it comultiplication map} of $S$ as a Hopf algebra. We have no idea why the comultiplication map appears in our context.   

(3) For $z_1, \ldots z_m \in P^*$ (we allow the case $z_i =z_j$ for some $i \ne j$), we have 
$$\Delta(t_{z_1}t_{z_2} \cdots t_{z_m})=\sum_{T \subset [m]} \left(\prod_{i \in T} t_{z_i} \otimes \prod_{j \in [m] \setminus T} t_{z_j} \right),$$
where $[m]:=\{1, \ldots, m\}$. 

(4) With the above situation (especially, $x\in [y_1 \vee \cdots \vee y_r]$), for $\ba=(a_1, \ldots, a_n) \in \ZZ^n$, 
$[\EE_x]_\ba \ne 0$ if and only if $a_{r+1}=\cdots =a_n \le 0$. Hence $\EE_x \cong \EE_x(\ba)$ if and only if $a_{r+1} = \cdots = a_n = 0$.
If $\rank x \ge 2$, then $\dim_K [\EE_x]_\ba =\infty$ often occurs. 
\end{rem}

\begin{exmp}
For the simplicial poset $P_1$ of Example~\ref{running}, we have  
$$\EE_x= K[t_1^{\pm 1},t_2^{\pm 1}] \otimes_K K[t_x^{-1}, t_z^{-1}],$$
and its module structure is given as follows. 
For $t_1^a t_2^b \otimes t_x^{-c}t_z^{-d} \in  \EE_x$ ($a,b \in \ZZ$, $c,d \in \NN$), we have 
$$x \cdot (t_1^a t_2^b \otimes t_x^{-c}t_z^{-d} )= t_1^{a+1}t_2^{b+1} \otimes  t_x^{-c}t_z^{-d}+ t_1^a t_2^b \otimes t_x^{-c+1}t_z^{-d}, $$ 
$$t_z \cdot (t_1^a t_2^b \otimes x^{-c}z^{-d})=t_1^a t_2^b \otimes t_x^{-c}t_z^{-d+1},$$ 
$$t_1 \cdot (t_1^a t_2^b \otimes t_x^{-c}t_z^{-d} )= t_1^{a+1}t_2^b \otimes  t_x^{-c}t_z^{-d},$$
$$t_2 \cdot (t_1^a t_2^b \otimes t_x^{-c}t_z^{-d} )= t_1^at_2^{b+1} \otimes  t_x^{-c}t_z^{-d}.$$
Here, if $c=0$ (resp. $d=0$), then $t_x^{-c+1}=0$ (resp. $t_z^{-d+1}=0$).   
\end{exmp}

To show injectivity and some other properties of $\EE_x$, we use a transformation of variables. 
For each $z\in P^*$, set\\
\begin{equation}\label{wt}
\wt_z:=
\begin{cases}
t_z-\disp\prod_{\substack{1\leq i \leq n\\y_i\leq z}} t_i  & \text{if $z\leq x$ and $\rank z\geq 2$,} \\
t_z & \text{otherwise.}
\end{cases}
\end{equation}
For the monomial $\alpha \in \EE_x$ in \eqref{monomials in E_x}, an easy calculation shows that 
$$\wt_{z'} \cdot \alpha = \prod_{i=1}^r t_i^{a_i} \otimes t_{z'} \prod_{z \in P^*_{-x}} t_z^{-a_z}$$
for all $z'\in P^*_{-x}$. In particular, if $a_{z'}=0$, then $\wt_{z'}\alpha=0$. 

Note that $\wt_z$ is a homogeneous element in the $\ZZ^n$-grading with $\deg t_z=\deg \wt_z$ for all $z \in P^*$, $\wt_i=t_i$ for all $1 \le i \le n$, and $S=K[\wt_z \mid z \in P^*]$.  
Consider the {\it subrings} $\wS_x:= K[\wt_1, \ldots, \wt_r]=K[t_1, \ldots, t_r]$ and $\wS_{-x}:= K[\wt_z \mid z \in P^*_{-x}]$ of $S$. (Recall that $S_x$ and $S_{-x}$ are {\it quotient rings}.) 
Clearly, we have $S_x \cong \wS_x$ and $S_{-x} \cong \wS_{-x}$ as rings.  Moreover,  
\begin{equation}\label{variable transformation}
S = \wS_x\otimes_K \wS_{-x}.  
\end{equation} 
Let $\fm_{-x}:= (\wt_z \mid z \in P^*_{-x})$ be the graded maximal ideal of $\wS_{-x}$. Then we have $\fp_x=\fm_{-x}S$. 
We regard the $\wS_x \otimes_K \wS_{-x}$-module  
$$\wE_x:= \wEA \otimes_K \wESx,$$ 
as an $S$-module via the isomorphism \eqref{variable transformation}.

\begin{lem}
We have $\EE_x \cong \wE_x$ as $\ZZ^n$-graded $S$-modules. 
\end{lem}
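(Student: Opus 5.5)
The plan is to write down the obvious $K$-linear bijection between the two monomial bases and check that it intertwines the actions of a convenient generating set of $S$. Both modules have a natural monomial basis. On the $\EE_x$ side these are the monomials $\alpha$ of \eqref{monomials in E_x}. On the $\wE_x$ side they are the elements $\prod_{i=1}^r t_i^{a_i}\otimes \prod_{z\in P^*_{-x}}\wt_z^{-a_z}$, with $(a_i)\in\ZZ^r$ and $(a_z)\in\NN^{P^*_{-x}}$. I define $\phi:\EE_x\to\wE_x$ by sending the monomial with exponents $((a_i),(a_z))$ to the monomial with the same exponents. Since $\deg \wt_z=\deg t_z$ for all $z$, and the $\ZZ^n$-gradings on both tensor products are defined by the same rule, $\phi$ is a degree-preserving $K$-isomorphism. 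It remains to show $\phi$ is $S$-linear.

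Because $S=K[\wt_z\mid z\in P^*]$, it suffices to check that $\phi$ commutes with multiplication by each $\wt_z$. The element $\wt_z$ lies either in $\wS_x$ (the case $z=y_i$, $i\le r$) or in $\wS_{-x}$ (the case $z\in P^*_{-x}$).

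For $z\in P^*_{-x}$, the computation stated just after \eqref{wt} already gives
\[
\wt_{z}\cdot\alpha=\prod_i t_i^{a_i}\otimes t_{z}\prod_{z'} t_{z'}^{-a_{z'}}
\]
in $\EE_x$. In $\wE_x$, multiplication by $\wt_z$ acts only on the second factor ${}^*\!E_{\wS_{-x}}(K)$, by the same rule. So $\phi$ commutes with these elements. I would verify that calculation briefly, as follows. The action goes through $\Delta$, and $\Delta(t_z)=t_z\otimes 1+1\otimes t_z$, so the correction term $\prod_{y_i\le z}t_i$ has to be tracked.

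\begin{itemize}
\item If $z\not\le x$, then $t_z$ maps to $0$ in $S_x$, so only the second summand $1\otimes t_z$ acts.
\item If $z\le x$ and $\rank z\ge2$, then $t_z$ maps to $\prod_{y_i\le z}t_i$ in $S_x$. Also $\Delta\bigl(\prod_{y_i\le z} t_i\bigr)$ acts through the second factor only by zero, because each $t_i$ with $i\le r$ is killed in $S_{-x}$; so this product acts purely as multiplication on the first factor. Hence the two contributions on the first factor cancel, leaving $1\otimes t_z$.
\end{itemize}

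For $z=y_i$ with $i\le r$, the image of $t_i$ in $S_{-x}$ is $0$. So in $\EE_x$, $t_i$ acts only on the first factor, by multiplication in $K[t_1^{\pm1},\dots,t_r^{\pm1}]$. The same is true in $\wE_x$. Thus $\phi$ intertwines these actions too.

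The only genuine point is the cancellation in the case $z\le x$, $\rank z\ge2$. It requires the elements $t_i$ ($i\le r$) to act "primitively", meaning that the second tensor summand of $\Delta(t_i)$ vanishes. This does hold, because $t_i\in(t_1,\dots,t_r)$. Since $\Delta$ is a ring homomorphism, the action of the product $\prod t_i$ then equals the product of the actions. With that observation the whole verification is routine, and $\phi$ is an isomorphism in $\*Mod S$.
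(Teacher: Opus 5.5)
Your proposal is correct and takes the same approach as the paper: the paper uses the same monomial-to-monomial map and justifies $S$-linearity by exactly the two facts you check, namely that $\wt_z$ for $z\in P^*_{-x}$ annihilates the first factor $\EA$ and $\wt_i$ for $i\le r$ annihilates the second factor $\ESx$. Your write-up simply spells out the $\Delta$-computation that the paper leaves implicit, including the cancellation of $\prod_{y_i\le z}t_i$, which uses $t_z\equiv\prod_{y_i\le z}t_i$ modulo $\fp_x$.
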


\begin{proof}
Since $\wt_z \EA=0$ for all $ z\in P^*_{-x}$ and $\wt_i \ESx=0$ for all $i=1, \ldots,r$, 
$$\EE_x \ni  \prod_{i=1}^r t_i^{a_i} \otimes \prod_{z\in P^*_{-x}} t_z^{-a_z} \longmapsto 
 \prod_{i=1}^r \wt_i^{a_i}  \otimes \prod_{z\in P^*_{-x}} \wt_z^{-a_z} \in \wE_x$$
 gives a $\ZZ^n$-graded isomorphism.  
\end{proof}

So, in the sequel, we freely identify $\EE_x$ with $\wE_x$.  
We also remark that, extending the $S$-module structure and the $K[t_1^{\pm 1}, \ldots, t_r^{\pm 1}]$-module structure of $\EE_x$, we can regard $\EE_x$ as an $S[t_1^{-1},\ldots, t_r^{-1}]$-module in the natural way. 

Clearly, $M \in \*Mod S$ admits the natural $\ZZ$-grading. 
In fact, if $\deg \alpha =(a_1, \ldots, a_n) \in \ZZ^n$ for $\alpha \in M$, then $\deg_\ZZ \alpha=\sum_{i=1}^n a_i$.  
For $m \in \NN$, let $(\EE_x)^{-m}$ be the $K$-subspace of $\EE_x$ spanned by 
$$\{  u \otimes v \in \EE_x \mid \deg_\ZZ v=-m  \},$$
and set 
$$(\EE_x)^{\ge -m} :=\bigoplus_{i\ge -m} (\EE_x)^{-i}.$$ Note that $(\EE_x)^{\ge -m}$ is an $S$-submodule of $\EE_x$. 
We have  $(\EE_x)^{\ge 0} =(\EE_x)^0 \cong \EA$ as an $S$-modules.  
Moreover, $S_x$ can be seen as a submodule of $\EA$ in the canonical way,  and we will regard $\EA$ and $S_x$ as submodules of $\EE_x$ in this way. 

\begin{prop}\label{ess ext}
$\EE_x$ is a *essential extension of $S_x$. 
\end{prop}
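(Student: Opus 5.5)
We need to show that every nonzero homogeneous $\alpha\in\EE_x$ satisfies $S\alpha\cap S_x\ne 0$. We work in the identification $\EE_x=\wE_x=\wEA\otimes_K\wESx$, where $S=\wS_x\otimes_K\wS_{-x}$ acts factorwise. The point is that in the tilde variables the module structure splits: $\wt_i$ ($1\le i\le r$) acts only on the first factor, and $\wt_z$ ($z\in P^*_{-x}$) acts only on the second factor by lowering the exponent of $t_z^{-1}$. So the argument proceeds in two stages, one per tensor factor.

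\textbf{Step 1: reduce to the degree-zero part.} Write a nonzero homogeneous $\alpha=\sum_j u_j\otimes v_j$, where the $v_j$ are distinct monomials of $\wESx=K[\wt_z^{-1}\mid z\in P^*_{-x}]$ and the $u_j\in\wEA$ are nonzero. Choose $j_0$ with $\deg_\ZZ v_{j_0}=-m$ minimal, that is, $m$ maximal. Write $v_{j_0}=\prod_z \wt_z^{-a_z}$ and put $\mu:=\prod_z\wt_z^{a_z}\in\wS_{-x}$. Then $\mu\cdot v_j$ is nonzero only if $v_j$ is divisible by $\prod_z \wt_z^{-a_z}$, i.e.\ only if $a_z(v_j)\ge a_z$ for all $z$. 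Among the $v_j$ of $\ZZ$-degree $-m$, this forces $v_j=v_{j_0}$. For $v_j$ of larger $\ZZ$-degree it is impossible, because dividing would require total degree at least $m$ (the $\ZZ$-degree of each $\wt_z^{-1}$ is $-\deg_\ZZ t_z\le -1$, so exponent comparison together with the degree bound rules it out). Hence
$$\mu\alpha=u_{j_0}\otimes 1\in(\EE_x)^0\cong \EA,$$
which is nonzero and homogeneous. An alternative, perhaps cleaner, route is to order the monomials $v_j$ by componentwise divisibility of exponent vectors, pick a maximal one, and observe that $\mu$ kills every other $v_j$ while sending $v_{j_0}$ to $1$. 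This avoids the degree argument entirely, and I would use it.

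\textbf{Step 2: essentiality inside the Laurent ring.} Now $0\ne u:=u_{j_0}\in K[t_1^{\pm1},\dots,t_r^{\pm1}]$ is homogeneous in the $\ZZ^r$-grading, hence a single monomial up to a scalar, since the $\ZZ^r$-grading of the Laurent ring has one-dimensional components. Multiplying by a suitable monomial $t_1^{b_1}\cdots t_r^{b_r}\in S$ with $b_i\ge 0$ large lands in $S_x\setminus\{0\}$. Although $\alpha$ is $\ZZ^n$-homogeneous rather than $\ZZ^r$-homogeneous, the first factor of $u\otimes 1$ is homogeneous, because its $\ZZ^n$-degree coincides with its $\ZZ^r$-degree padded by zeros. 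If one prefers to avoid this, the alternative is to multiply by a high power of $t_1\cdots t_r$ to clear all negative exponents at once, giving a nonzero element of $S_x$. Either way $S\alpha\cap S_x\ne0$.

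\textbf{Main obstacle.} The delicate step is Step 1: one must verify that the monomial $\mu$ does not kill the chosen summand and does not produce cancellation among the other summands. The divisibility-maximal choice handles this cleanly because the action of $\wt_z$ on the second factor is purely monomial. This is exactly why the change of variables to $\wt$ is made: in the original $t$-variables, $t_z$ for $z\le x$ would act on both factors.
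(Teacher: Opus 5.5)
Your proof is correct and follows essentially the same route as the paper. You push $\alpha$ into $(\EE_x)^0\cong \EA$ using the $\widetilde{t}_z$ ($z\in P^*_{-x}$), which act monomially on the second tensor factor, and then use that the Laurent ring is a *essential extension of $S_x$; the differences are cosmetic: you do the first stage in one step with the monomial $\mu$ attached to a divisibility-maximal $v_{j_0}$ (the paper inducts on $m$, applying one $\widetilde{t}_z$ at a time), and you verify the Laurent-ring essentiality directly where the paper simply cites it.
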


\begin{proof}
Since $\EA$ is a *essential extension of $S_x$, it suffices to show  that $\EE_x$ is  a *essential extension of $\EA$. For  $0 \ne \alpha \in \EE_x$, take the minimum $m$ such that $\alpha = \sum_i c_i u_i \otimes v_i \in (\EE_x)^{\ge -m}$, where, for each $i$, $0 \ne c_i \in K$, and $u_i \in \EA$ and $v_i \in \ESx$ are Laurent monomials. 
We will prove the assertion by induction on $m$. There is nothing to prove if $m=0$. So we may assume that $m >0$. Then we can take $z \in P^*_{-x}$ with $t_z v_i \ne 0$ for some $i$. Then we have 
$0 \ne \wt_z \alpha \in (\EE_x)^{\ge -m+1}$. By the induction  hypothesis, there exists $f \in S$ such that $0 \ne (f \wt_z) \cdot \alpha=f \cdot (\wt_z \alpha ) \in \EA$, and we are done. 
\end{proof}

The category $\*Mod S$ admits enough injectives, and an indecomposable injective is isomorphic to a degree shift $\EE_S(S/\fp)(\ba)$ for $\ba \in \ZZ^n$ of the *injective envelope $\EE_S(S/\fp)$ for some $\ZZ^n$-graded primes ideal $\fp$ (c.f. \cite[Theorem~1.3.3]{GW}), and any injective is isomorphic to a direct sum of indecomposable ones. As in the nongraded case, the graded Bass number of $M \in \*Mod S$ can be computed by 
$$\mu^i(\fp, M)=\dim_{\kappa(\fp)}(\gExt_S^i
(S/\fp, M) \otimes \kappa(\fp))$$
for a $\ZZ^n$-graded prime ideal $\fp$, where $\kappa(\fp)$ is the quotient field of $S/\fp$. 

\begin{thm}
The above  $\EE_x$ is the *injective envelope of $S_x \, (=S/\fp_x)$.
\end{thm}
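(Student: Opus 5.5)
Proposition~\ref{ess ext} already gives that $\EE_x$ is a *essential extension of $S_x$. It therefore suffices to show that $\EE_x$ is injective in $\*Mod S$. We work throughout with the identification $\EE_x\cong\wE_x=\wEA\otimes_K\wESx$ and the decomposition $S=\wS_x\otimes_K\wS_{-x}$ of \eqref{variable transformation}. In these coordinates both the ring and the module split as tensor products over $K$ of objects in the two disjoint sets of variables $\{\wt_1,\dots,\wt_r\}$ and $\{\wt_z\mid z\in P^*_{-x}\}$. This is exactly the setting of \cite[Proposition~3.1.3 and Theorem~1.3.3]{GW}, where *injectives over polynomial rings are built as tensor products.

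Concretely, I plan to verify injectivity via a graded Baer criterion. A $\ZZ^n$-graded module $E$ is *injective if and only if, for every $\ZZ^n$-graded ideal $J\subset S$ and every $\ba$, each graded map $J\to E(\ba)$ extends to $S$. Equivalently, $\gExt^1_S(S/J,E)=0$ for all graded ideals $J$, and it is enough to check $\gExt^i_S(M,E)=0$ for all $i\ge1$ and all finitely generated graded $M$.

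\begin{enumerate}
\item I will first reduce to the factors. The module $\wEA$ is *injective over $\wS_x$: as a $\ZZ^r$-graded $\wS_x$-module it is the Laurent ring, and the $\ZZ^n$-grading simply extends the $\ZZ^r$-grading by zeros. Likewise, $\wESx$ is *injective over $\wS_{-x}$, since it is Macaulay's inverse system and injective regardless of grading.
\item Next, for finitely generated graded $M$, I will compute $\gHom_S(M,\wEA\otimes_K\wESx)$ using a graded free resolution of $M$ over $S=\wS_x\otimes_K\wS_{-x}$. I will use the adjunction $\gHom_S(M,E_1\otimes_K E_2)\cong\gHom_{\wS_x}\bigl(M\otimes_{\wS_{-x}}\!\text{-}\,\ldots\bigr)$. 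More practically, I will use the fact that $\wESx=\gHom_K(\wS_{-x},K)$ in each degree, i.e.\ it is the graded Matlis dual of $\wS_{-x}$. This gives
\[
\gHom_S(M,\wEA\otimes_K\wESx)\cong \gHom_{\wS_x}(M,\wEA)
\]
with $M$ regarded as an $\wS_x$-module by restriction. This holds provided the graded pieces are handled correctly; this is where the support condition of Remark~\ref{E_x rem}(4) will be used.
\item Finally, restriction from $S$ to $\wS_x$ is exact. Graded free $S$-modules restrict to graded free $\wS_x$-modules, because $S$ is free over $\wS_x$ with basis the monomials in the $\wt_z$. Hence $\gExt^i_S(M,\EE_x)\cong\gExt^i_{\wS_x}(M,\wEA)=0$ for $i\ge1$, by the injectivity of $\wEA$.
\end{enumerate}

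The main obstacle is step 2. The graded pieces $[\EE_x]_\ba$ can be infinite-dimensional (Remark~\ref{E_x rem}(4)), so the identification of $\wESx$ with a graded $K$-dual, and the resulting Hom adjunction, must be checked degreewise with care. The grading on $\wS_{-x}$ has infinite-dimensional components in the $\ZZ^n$-grading once we forget the first $r$ coordinates. To handle this, I would instead use a finer auxiliary grading: grade $S$ by $\ZZ^r\times\ZZ^{P^*_{-x}}$, with $\wt_i$ of degree $\be_i$ and $\wt_z$ of degree $\be_z$. Under this grading $\wESx$ is the honest graded dual with one-dimensional pieces. I would prove injectivity in that finer category via the classical tensor-product criterion of \cite{GW}. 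Then I would note that a module injective in the finer graded category is injective in $\*Mod S$, since the Baer criterion with coarser graded ideals follows by refining them. If that last transfer fails, the fallback is a direct Baer-type argument: extend a map $J\to\EE_x$ first along the $\wS_{-x}$-directions using the inverse-system description, by induction on $m$ as in the proof of Proposition~\ref{ess ext}, and then along the $\wS_x$-directions using divisibility by the $t_i$ in the Laurent factor.
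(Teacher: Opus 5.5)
There is a genuine gap, and it is in Step~2. The isomorphism ${\gHom}_S(M,\wEA\otimes_K\wESx)\cong{\gHom}_{\wS_x}(M,\wEA)$ is false whenever $\rank x\ge 2$.

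What the adjunction actually gives is ${\gHom}_{\wS_x}(M,\wEA)\cong{\gHom}_S(M,C)$, where $C:={\gHom}_{\wS_x}(S,\wEA)$ is the graded coinduced module. In each degree, $C$ is a \emph{product} over the monomials of $\wS_{-x}$. Infinitely many of these monomials contribute, for example $\wt_x^k$ for $k\in\NN$, whose degrees are supported in the first $r$ coordinates. By contrast, $\EE_x$ is only the direct-sum part.

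Here is a concrete counterexample. Put $Q:=(\wt_z\mid z\in P^*_{-x},\ z\ne x)+(\wt_x-\wt_1\cdots\wt_r)$, which is a $\ZZ^n$-graded prime, and let $M:=S/Q$.
\begin{itemize}
\item As a $\wS_x$-module, $M\cong\wS_x$, so your right-hand side is $\wEA\ne0$.
\item On the other hand, $\wt_x-\wt_1\cdots\wt_r$ acts bijectively on $\EE_x$. Indeed, $-\wt_1\cdots\wt_r$ acts invertibly because $\EE_x$ is an $S[t_1^{-1},\dots,t_r^{-1}]$-module, and $\wt_x$ commutes with it and is locally nilpotent. Hence ${\gHom}_S(M,\EE_x)=0$.
\end{itemize}
So Steps 1--3 prove that $C$ is *injective, not that $\EE_x$ is.

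The finer-grading repair does not close the gap. It is true that $\EE_x$ is injective among $\ZZ^r\times\ZZ^{P^*_{-x}}$-graded modules. But injectivity does not descend along a coarsening of the grading. Ideals like $Q$ cannot be ``refined,'' because they are not homogeneous in the finer grading. For instance, $K[a^{\pm1},b^{\pm1}]$ is $\ZZ^2$-graded injective over $K[a,b]$, but it is not $\ZZ$-graded injective when $\deg a=\deg b=1$, since $a-b$ does not act surjectively on it. Your final fallback uses only divisibility by the $t_i$, so it says nothing about non-monomial test ideals such as $Q$.

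The missing idea is the reduction the paper makes before doing any computation. Since $\Ass_S\EE_x=\{\fp_x\}$ (Proposition~\ref{ess ext}), the graded Bass numbers $\mu^1(\fp,\EE_x)$ vanish unless $\fp\supset\fp_x$. So it suffices to show ${\gExt}^1_S(S/\fp,\EE_x)=0$ for $\fp=\fp_{x'}$ with $x'\le x$.

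Such a $\fp$ equals $\fq S+\fm_{-x}S$, where $\fq=\fp\cap\wS_x$ is a monomial prime. Hence $S/\fp\cong\wS_x/\fq\otimes_K\wS_{-x}/\fm_{-x}$ is an external tensor product of finitely generated modules. The paper then concludes with the K\"unneth formula
${\gExt}^1_S(S/\fp,\EE_x)\cong\bigoplus_{p+q=1}{\gExt}^p_{\wS_x}(\wS_x/\fq,\wEA)\otimes_K{\gExt}^q_{\wS_{-x}}(K,\wESx)=0$.
That formula is valid precisely because Hom out of finitely generated free modules commutes with $\otimes_K$, and this is exactly what breaks down for arbitrary $M$ in your Step~2.

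Once this reduction is in place, your finer-grading idea also works. These $\fp$ are generated by $\wt$-variables, so they are homogeneous in the fine grading. For the finitely generated module $S/\fp$, the Ext module does not depend on which grading is used. So fine-graded injectivity of $\EE_x$ gives the required vanishing.
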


\begin{proof}
Since, in Proposition~\ref{ess ext}, we have already shown that $\EE_x$ is a *essential extension of $S_x$, it suffices to show that $\EE_x$ is injective in $\*Mod S$.
By Proposition~\ref{ess ext}, we have $\Ass_S\EE_x =\{\p_x\}$. 
Hence,  if $\mu^i(\fp, \EE_x)\ne 0$ for a $\ZZ^n$-graded prime ideal $\fp$, then $\fp \supset \fp_x$.  
So, to prove the injectivity in $\*Mod S$, it suffices to show that $\gExt_S^1(S/\fp,\EE_x)=0$ for any $\ZZ^n$-graded prime ideal $\fp$ with $\fp \supset \mathfrak{p}_x$. 
Since $\fp \supset I_P$, we have $\fp =\fp_{x'}$ for some 
$x' \in P$ (actually, $x' \le x$). 
Moreover, for the (monomial) prime ideal $\fq:=\fp\cap \wS_x$ of $\wS_x$, we have $S/\fp \cong \wS_x/\fq \otimes_K \wS_{-x}/\m_{-x}$. 
Using the $\mathrm{K\ddot{u}nneth}$ formula in \cite{STY}, we have
\begin{align*}
& \gExt^1_S(S/\fp, \EE_x)\\
\cong &\bigoplus_{\substack{(p,q)=(1,0),\\ \qquad   (0,1)}} {\gExt}^p_{\wS_x}(\wS_x/\fq, \wEA)\otimes_K {\gExt}^q_{\wS_{-x}}( \wS_{-x}/\m_{-x}, \wESx) \\
=&0,  
\end{align*}
since $\gExt^1_{\wS_x}(\wS_x/\fq, \wEA)={\gExt}^1_{\wS_{-x}}( \wS_{-x}/\m_{-x}, \wESx)=0$. 
\end{proof}

For further study, let us find specific elements of $I_P$. As before, we assume that $x \in [y_1 \vee \cdots \vee y_r]$. For a subset $U \subset \{1, \ldots, r\}$, we simply denote $[\bigvee_{i \in U} y_i]$ by $[U]$. 
The set $[U]$ admits a unique element $z$ with $z \le x$. 
Note that,  for $i \in U$, we have 
$$[U]=\bigsqcup_{w \in [U \setminus \{i\}] } [w \vee y_i].$$

\begin{lem}\label{specific elements}
With  the above notation, we have the following.     
\begin{itemize}
\item[(1)] For $U \subset \{ 1, \ldots, r\}$ with $\#U \ge 2$, we have $f_{U}:=\sum_{z \in [U]} \wt_z \in I_P$. 
\item[(2)] Take distinct $z_1, z_2 \in [U]$ with $z_1 \le x$ (hence $z_2 \not \le x$). Then we have 
$$g_{z_1,z_2}:=\wt_{z_1}\wt_{z_2}+ \wt_{z_2} \cdot  \prod_{i \in U} \wt_i  \in I_P.$$
\end{itemize}
\end{lem}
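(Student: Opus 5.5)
We prove (2) by reducing $g_{z_1,z_2}$ to a single generator $f_{z_1,z_2}=t_{z_1}t_{z_2}$. We prove (1) by induction on $\#U$, using the decomposition $[U]=\bigsqcup_{w\in[U\setminus\{i\}]}[w\vee y_i]$ noted just before the lemma.

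First we record some facts. Every $z\in[U]$ has rank $\#U$, and $\{y_j\mid y_j\le z\}=\{y_j\mid j\in U\}$: the interval below $z$ is boolean, so a minimal upper bound of $\{y_j\}_{j\in U}$ is exactly their join in that boolean algebra. The unique element of $[U]$ lying below $x$ is the one for which $\wt_z=t_z-\prod_{j\in U}t_j$ when $\#U\ge2$. For every other $z\in[U]$ we have $\wt_z=t_z$. Hence, for $\#U\ge2$,
$$f_U=\sum_{z\in[U]}t_z-\prod_{j\in U}t_j ,$$
and for $\#U=1$ we simply have $\sum_{z\in[U]}t_z=t_j$. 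Consequently, (1) is equivalent to the statement $\sum_{z\in[U]}t_z\equiv\prod_{j\in U}t_j \pmod{I_P}$ for all $U$ with $\#U\ge1$, and we prove this form by induction on $\#U$.

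For $\#U=1$ the congruence is trivial. For $\#U=2$, say $U=\{i,j\}$, we have $y_i\wedge y_j=\hat0$, so $f_{y_i,y_j}=t_it_j-\sum_{z\in[y_i\vee y_j]}t_z\in I_P$, which is exactly the claim. For the inductive step, fix $i\in U$ and set $U'=U\setminus\{i\}$. For each $w\in[U']$ we have $y_i\not\le w$, hence $w\wedge y_i=\hat0$. Therefore
$$f_{w,y_i}=t_wt_i-\sum_{z\in[w\vee y_i]}t_z\in I_P .$$
Summing over $w\in[U']$ and using the disjoint decomposition of $[U]$ gives
$$\sum_{z\in[U]}t_z\equiv t_i\sum_{w\in[U']}t_w \pmod{I_P}.$$
By the induction hypothesis, $\sum_{w\in[U']}t_w\equiv\prod_{j\in U'}t_j$, so the right-hand side is congruent to $\prod_{j\in U}t_j$, completing the induction. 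The only point needing care here is the bookkeeping of which $\wt_z$ carry the correction term; the identity relating $f_U$ to $\sum_{z\in[U]}t_z$ above handles it.

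For (2), take $z_1\neq z_2$ in $[U]$. They have the same rank and the same set of atoms below them. Suppose some $u$ satisfied $u\ge z_1,z_2$. Then the boolean interval below $u$ would contain two distinct elements with the same atoms below them, which is impossible. So $[z_1\vee z_2]=\emptyset$, and by the convention in the definition of $f_{x,y}$ we get $f_{z_1,z_2}=t_{z_1}t_{z_2}\in I_P$. Since $z_1\le x$ and $z_2\not\le x$, we have $\wt_{z_1}=t_{z_1}-\prod_{i\in U}t_i$ and $\wt_{z_2}=t_{z_2}$, with $\#U\ge2$ because $[U]$ has two distinct elements. Substituting,
$$g_{z_1,z_2}=\Bigl(t_{z_1}-\prod_{i\in U}t_i\Bigr)t_{z_2}+t_{z_2}\prod_{i\in U}t_i=t_{z_1}t_{z_2}\in I_P ,$$
which proves (2).
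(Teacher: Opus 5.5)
Your proof is correct and is essentially the paper's argument: you induct on $\#U$ using the relations $f_{w,y_i}$ for $w \in [U\setminus\{i\}]$ and the decomposition $[U]=\bigsqcup_{w}[w\vee y_i]$, and for (2) you observe that $g_{z_1,z_2}$ equals the generator $f_{z_1,z_2}=t_{z_1}t_{z_2}$. The differences are cosmetic: you first rewrite $f_U=\sum_{z\in[U]}t_z-\prod_{j\in U}t_j$, so you need not treat the element of $[U]$ below $x$ separately in the inductive step, and you give the (correct) justification that $[z_1\vee z_2]=\emptyset$, which the paper only asserts.
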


\begin{proof}
(1) We will prove the assertion by induction on $\# U$. 
First, consider the case $\# U=2$. Then we may assume that $U=\{1, 2\}$ and $[U]=\{ z_1, \ldots, z_k\}$ with $z_1 \le x$.  Then we have 
$$I_P \ni f_{y_1,y_2}=t_1 t_2 -\sum_{i=1}^k t_{z_i}
= t_1 t_2 -t_{z_1}-\sum_{i=2}^k t_{z_i}
=  -\wt_{z_1}-\sum_{i=2}^k \wt_{z_i}=-\sum_{z \in [U]} \wt_z .$$

Next, consider the case $m:=\# U \ge 3$. We may assume that $U=\{1, \ldots, m\}$. Then,  for $z_1 \in [U \setminus \{m\}]$ and $w_1 \in [U]$ with $z_1 < w_1 \le x$, we have $w_1 \in [y_m \vee z_1] \subset [U]$, and hence we can denote $[y_m \vee z_1]=\{w_1, \ldots, w_k \}$. 
In the sequel, $f \equiv g$ for $f,g \in S$ means $f-g \in I_P$. Since $y_m \wedge z_1 =\hat{0}$, we have $t_mt_{z_1} \equiv \sum_{i=1}^k t_{w_i}$.  So 
\begin{eqnarray*}
t_m \wt_{z_1}=t_m (t_{z_1}-t_1\cdots t_{m-1})&=&t_mt_{z_1}-t_1\cdots t_m \\
&\equiv&\sum_{i=1}^k t_{w_i}-t_1\cdots t_m \\
&=& (t_{w_1}-t_1\cdots t_m)+\sum_{i=2}^k t_{w_i}\\
&=& \wt_{w_1}+\sum_{i=2}^k \wt_{w_i}\\
&=&\sum_{i=1}^k \wt_{w_i}=\sum_{w \in [y_m \vee z_1]} \wt_w. 
\end{eqnarray*}
For $z \in [U \setminus \{m\}]$ with $z \ne z_1$, we have $\wt_z =t_z$, $y_m \wedge z =\widehat{0}$, and hence 
$$t_m \wt_z=t_m t_z \equiv \sum_{w \in [y_m \vee z]}t_w=\sum_{w \in [y_m \vee z]}\wt_w.$$ 

By the induction hypothesis, we have  
$\sum_{z \in [U \setminus \{m\}]} \wt_z \in I_P$. Hence 
\begin{eqnarray*}
I_P \ni \, t_m \cdot \sum_{z \in [U \setminus \{m\}]} \wt_z 
&=& t_m \wt_{z_1} +  \sum_{\substack{z \in [U \setminus \{m\}] \\ z \ne z_1}} t_m \wt_z  \\
&\equiv& \sum_{w \in [y_m \vee z_1]} \wt_w+  \sum_{\substack{z \in [U \setminus \{m\}] \\ z \ne z_1}} t_m \wt_z  \\
&\equiv& \sum_{w \in [y_m \vee z_1]} \wt_w+ \sum_{\substack{z \in [U \setminus \{m\}] \\ z \ne z_1}}  \sum_{w \in [y_m \vee z]} \wt_w \\
&=&\sum_{z \in [U \setminus \{m\}]}  \sum_{w \in [y_m \vee z]} \wt_w \\
&=&\sum_{w \in [U]} \wt_w=f_U, 
\end{eqnarray*}
and we get $f_U \in I_P$.

(2) Since $[z_1 \vee z_2]=\emptyset$, we have 
$$I_P \ni f_{z_1,z_2}=t_{z_1}t_{z_2}= \left( \wt_{z_1} +  \prod_{i \in U} \wt_i 
\right)
\wt_{z_2}=g_{z_1,z_2}.$$
\end{proof}

We regard $\ZZ^n$ as a poset by component-wise comparison. In particular, for $\alpha=(a_1, \ldots, a_n) \in \ZZ^n$, $\alpha \ge \b0$ if and only if $a_i \ge 0$ for all $i$. 
For a $\ZZ^n$-graded $S$-module $M=\bigoplus_{\ba \in \ZZ^n} M_\ba$, $M_{\ge \b0}$ denotes the submodule $M=\bigoplus_{\ba \ge \b0} M_\ba$.

\begin{prop}\label{0:I_P}
For any $x\in P$, $$[\gHom_S(A_P, \EE_x)]_{\geq \bf{0}} \cong (0:_{\EE_x} I_P)_{\ge \bf{0}} = S_x.$$ 
\end{prop}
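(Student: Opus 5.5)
The first isomorphism is formal. Since $A_P=S/I_P$ is cyclic, a degree-$\ba$ homomorphism $A_P\to \EE_x(\ba)$ is determined by the image of $1$. That image is an element of $[\EE_x]_\ba$ annihilated by $I_P$. Hence $\gHom_S(A_P,\EE_x)\cong (0:_{\EE_x}I_P)$ as $\ZZ^n$-graded modules, and taking the part in degrees $\ge\b0$ gives the first isomorphism. The real content is the equality $(0:_{\EE_x}I_P)_{\ge \b0}=S_x$, where $S_x\subset \EA=(\EE_x)^0\subset\EE_x$ as before.

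The inclusion $\supseteq$ is easy. By the identification $\EE_x\cong\wE_x$, the submodule $\EA=(\EE_x)^0$ is killed by every $\wt_z$ with $z\in P^*_{-x}$, hence by $\fm_{-x}S=\fp_x\supset I_P$. Moreover $S_x$ sits in degrees $\ge\b0$: by Remark~\ref{E_x rem}(4), the coordinates $r+1,\dots,n$ of these degrees vanish, and monomials $t_1^{a_1}\cdots t_r^{a_r}$ with $a_i\ge 0$ have nonnegative degree.

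For $\subseteq$, take a homogeneous $\alpha\in(0:_{\EE_x}I_P)$ of degree $\ba\ge\b0$. By Remark~\ref{E_x rem}(4), $a_{r+1}=\dots=a_n=0$. I argue in two steps.

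Step 1: show $\alpha\in(\EE_x)^0=\EA$. Write $\alpha=\sum_j u_j\otimes v_j$ with $v_j$ monomials in the $\wt_z^{-1}$, $z\in P^*_{-x}$. Suppose some $v_j$ has $\deg_\ZZ v_j=-m<0$, with $m$ maximal. Pick a variable $\wt_{z}$ dividing (the inverse of) some such $v_j$. The degree constraints say that $\deg v_j$ is supported on $\{1,\dots,r\}$, so every such $z$ satisfies $\{i\mid y_i\le z\}\subset\{1,\dots,r\}$. Thus $z\in[U]$ for some $U\subset\{1,\dots,r\}$ with $\#U\ge 2$, and $z\not\le x$. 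Let $z_1\in[U]$ be the unique element below $x$. By Lemma~\ref{specific elements}(2), $g_{z_1,z}=\wt_{z_1}\wt_z+\wt_z\prod_{i\in U}\wt_i\in I_P$. Since $\wt_{z_1}$ acts on $\EE_x$ only on the first tensor factor (it lies in $\wS_x$ after the identification), we get $\wt_{z_1}=0$ on $\EE_x$. Hence $g_{z_1,z}$ acts as $\wt_z\cdot\prod_{i\in U}t_i$. Multiplication by $\prod_{i\in U}t_i$ is invertible on $\EE_x$ by \eqref{E_x as a module over Laurent}. So $\wt_z\alpha=0$, which says no $v_j$ involves $\wt_z^{-1}$, a contradiction. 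Here one must check carefully that $\wt_{z_1}$ is genuinely zero on $\EE_x$ in the $\wE_x$ picture. I expect this to be fine, since $z_1\le x$ has rank $\ge2$ and $\wt_{z_1}\notin\{\wt_1,\dots,\wt_r\}$. But then $\wt_{z_1}$ is one of the $\wt_z$ with $z\in P^*_{-x}$ and acts on the second factor, so the relation reads $\wt_{z_1}\wt_z\alpha=-\wt_z\prod t_i\,\alpha$. I would therefore run a descending induction on the $\ZZ$-degree of the second factor to conclude that the top component vanishes. This bookkeeping is the main obstacle.

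Step 2: given $\alpha\in\EA\cong K[t_1^{\pm1},\dots,t_r^{\pm1}]$ homogeneous of degree $(a_1,\dots,a_r,0,\dots,0)\ge\b0$, it is a scalar times $t_1^{a_1}\cdots t_r^{a_r}$ with all $a_i\ge0$. This lies in $S_x$.
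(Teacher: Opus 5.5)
There is a genuine gap in Step~1. You assert that every $z$ for which $t_z^{-1}$ occurs in some $v_j$ satisfies $z\not\le x$. This is false. The set $P^*_{-x}=P^*\setminus\{y_1,\dots,y_r\}$ contains every $z\le x$ of rank $\ge 2$ (e.g.\ $x$ itself), and your degree constraint does not exclude these. For $z=z_1\le x$ there is no element $g_{z_1,z}$, and relations of type $g$ alone cannot prove the proposition. Take $P$ a boolean algebra with top element $x$ of rank $r\ge 2$: there is no $z\not\le x$ at all, yet $t_1\cdots t_r\otimes t_x^{-1}\in[\EE_x]_{\b0}$ is not in $S_x$. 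What excludes it is the linear relation $f_U=\sum_{z\in[U]}\wt_z\in I_P$ of Lemma~\ref{specific elements}(1) (here $f_{\{1,\dots,r\}}=\wt_x$), which your proposal never uses. The paper's proof uses both kinds of relations. First, $f_U\alpha=0$ forces some $z'\in[U]\setminus\{z\}$ to occur as well. Hence one of $z,z'$ differs from the element $w\in[U]$ below $x$, and then $g_{w,z}$ or $g_{w,z'}$ gives the contradiction.

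Your claim that $\wt_{z_1}$ acts as $0$ on $\EE_x$ is also false: as you suspected, $z_1\in P^*_{-x}$, so $\wt_{z_1}$ acts on the second factor. But the descending induction you have in mind is short once $f_U$ is added.

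Write $\alpha=\sum_m\alpha^{(-m)}$ with $\alpha^{(-m)}\in(\EE_x)^{-m}$, and let $m_0$ be maximal with $\alpha^{(-m_0)}\neq 0$. For $z\in[U]$, $\wt_z$ raises the second-factor degree by $\#U$, while $t_U:=\prod_{i\in U}t_i$ preserves it. So the component of $g_{z_1,z}\alpha=0$ in $(\EE_x)^{-m_0+\#U}$ is $t_U\wt_z\alpha^{(-m_0)}$; the other possible contribution would come from $\alpha^{(-m_0-\#U)}=0$. Hence $\wt_z\alpha^{(-m_0)}=0$ for all $z\in[U]\setminus\{z_1\}$. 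The same component of $f_U\alpha=0$ is $f_U\alpha^{(-m_0)}=\wt_{z_1}\alpha^{(-m_0)}$, so this vanishes too.

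Combined with your degree observation, $\alpha^{(-m_0)}$ is annihilated by every $\wt_z$ with $z\in P^*_{-x}$, which forces $m_0=0$. With this repair, the rest of your argument is fine: the formal first isomorphism, the inclusion $\supseteq$, and Step~2.
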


\begin{proof} 
It suffices to show that $(0:_{\EE_x} I_P)_{\ge \bf{0}}=S_x$.  
It is clear that $(0:_{\EE_x} I_P)_{\ge \bf{0}} \supset S_x$, 
and it remains to show the opposite inclusion. 
Take a homogeneous element 
$$0 \ne \alpha=\sum_{i} c_i u_i\otimes v_i \in (0:_{\EE_x} I_P)_{\ge \bf{0}},$$ where $0 \ne c_i \in K$, and $u_i \in \EA$ and $v_i \in \ESx$ are Laurent monomials.  
We will show that $\deg_\ZZ(v_i)=0$ (that is, $v_i=1$) for any $i$ by contradiction. 
Assume that $\deg_\ZZ(v_j) \ne 0$ (equivalently, $\deg_\ZZ(v_j) < 0$), and it is a minimum among all $\deg_\ZZ(v_i)$'s.  Note that $\deg_\ZZ(u_j)$ is a maximum  among all $\deg_\ZZ(u_i)$'s.
Then  $\wt_z v_j \ne 0$ for some $z \in P^*_{-x}$. Since $\deg \alpha \ge \b0$, we have $\deg(t_z) \le \deg (t_x)$ and $\rank z  \ge 2$. Take $U \subset \{1, \ldots, r \}$ such that $z \in [U]$.  We take $f_U \in I_P$ of Lemma~\ref{specific elements}. Since $f_U \cdot \alpha = 0$, there is some $z' \in [U]$ such that $z' \ne z$ and $\wt_{z'}v_k\ne 0$ with $k \ne j$. Take $w \in [U]$ with $w \le x$. At least one of $z$ and $z'$ is different from $w$. If $z \ne w$ (resp. $z'\ne w$), we have $g_{w, z} \cdot \alpha \ne 0$ (resp. $g_{w, z'} \cdot \alpha \ne 0$) under the notation of Lemma~\ref{specific elements}. (To see this, assume that $z \ne w$. Let $u_l \otimes v_l$ be a monomial of maximum $\deg_\ZZ u_l$ among all monomials $u_i \otimes v_i$'s such that $\wt_z v_i \ne 0$.   Then $(\prod_{i \in U} \wt_i) u_l \otimes \wt_z v_l$ survives in $g_{w, z} \cdot \alpha$.)   This is a contradiction, since $g_{w, z}, g_{w,z'} \in I_P$. 
\end{proof}

\section{Homomorphisms between *injective envelopes}
We start this section with the following lemma. 
We simply denote $1 \otimes 1 \in \EA \otimes_K \ESx =\EE_x$ by $1_x$. 

\begin{lem}\label{S_x -> S_z}
For $\alpha \in S_x \subset \EE_x$ and $\varphi \in [\gHom_S(\EE_x, \EE_z)]_{\b0}$ (that is, $\varphi: \EE_x \to \EE_z$ is a morphism in $\*Mod S$), we have $\varphi(\alpha) \in S_z$. In particular, $\varphi(1_x)= c \cdot 1_z$ for some $c \in K$. 
\end{lem}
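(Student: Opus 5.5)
Since $\alpha \in S_x \subset \EE_x$ is annihilated by $I_P$, and $\varphi$ is $S$-linear, the image $\varphi(\alpha)$ is annihilated by $I_P$ as well, i.e.\ $\varphi(\alpha)\in (0:_{\EE_z} I_P)$. Moreover, $S_x\cong K[t_1,\ldots,t_r]$ is concentrated in degrees $\ge \b0$ of $\ZZ^n$ (every monomial $t_1^{a_1}\cdots t_r^{a_r}$ with $a_i\ge 0$ has degree in $\NN^n$), and $\varphi$ is degree preserving. So for homogeneous $\alpha\in S_x$, $\varphi(\alpha)$ is a homogeneous element of $(0:_{\EE_z} I_P)_{\ge \b0}$. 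By Proposition~\ref{0:I_P} applied to $z$, this module equals $S_z$, so $\varphi(\alpha)\in S_z$. For general $\alpha\in S_x$ we decompose into homogeneous components and use linearity.

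For the second claim, $1_x$ has degree $\b0$, so $\varphi(1_x)\in [S_z]_{\b0}$. Since $S_z\cong K[t_i \mid y_i\le z]$ is a polynomial ring whose variables have nonzero degrees in $\NN^n$, we get $[S_z]_{\b0}=K\cdot 1_z$. Hence $\varphi(1_x)=c\cdot 1_z$ for some $c\in K$.

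The only point needing care is checking that $S_x$ really sits in degrees $\ge\b0$ inside $\EE_x$ under the stated grading convention. This holds because $u\otimes 1$ with $u$ a nonnegative monomial in $t_1,\ldots,t_r$ has degree $(a_1,\ldots,a_r,0,\ldots,0)$. Everything else is a direct application of Proposition~\ref{0:I_P}, so I expect no real obstacle.
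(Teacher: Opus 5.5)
Your proposal is correct and is essentially the paper's proof. The paper also puts $\alpha$ in $(0:_{\EE_x} I_P)_{\ge \b0}$ and uses $S$-linearity and degree preservation of $\varphi$ to land in $(0:_{\EE_z} I_P)_{\ge \b0}=S_z$ via Proposition~\ref{0:I_P}. You additionally verify the easy inclusion $S_x\subset(0:_{\EE_x} I_P)_{\ge\b0}$ and spell out why $[S_z]_{\b0}=K\cdot 1_z$, which the paper leaves implicit.
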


\begin{proof}
Since $\alpha \in S_x=(0:_{\EE_x} I_P)_{\ge \mathbf 0}$ by Proposition~\ref{0:I_P}, we have $\varphi(\alpha) \in (0:_{\EE_z} I_P)_{\ge \mathbf 0}=S_z$.     
\end{proof}

Let $x,z \in P$.  If $x \not \ge z$, there is no non-zero $S$-homomorphisms $S_x \to S_z$ and $\EE_x \to \EE_z$. 
If $x \ge z$, there is a canonical surjection $\pi: S_x \to S_z$, and any $\ZZ^n$-graded $S$-homomorphism $S_x \to S_z$ coincides with $c \pi$ for some $c \in K$. For these $x, z$, we have $[\gHom_S(\EE_x, \EE_z)]_{\b0} \ne 0$. In fact, the non-zero $S$-homomorphism $S_x \to \EE_z$ which is the composition  $S_x   \twoheadrightarrow S_z \hookrightarrow \EE_z$ can be extended to $\EE_x \to \EE_z$ by the injectivity of $\EE_z$. 
Moreover,  
for $\varphi \in [\gHom_S(\EE_x, \EE_z)]_{\b0}$, we have $\varphi|_{S_x}=c \pi$ for some $c \in K$ by Lemma~\ref{S_x -> S_z}. 

\begin{rem}
If $\rank x \ge 2$, there is $0 \ne \varphi \in [\gHom_S(\EE_x, \EE_z)]_\b0$ with $\varphi|_{S_x}=0$, or equivalently, $\varphi(1_x)=0$.
To see this, set $M:=\EE_x/S_x$.    
Since $\rank x \ge 2$ now, $\alpha:=\pi_x(t_x) \otimes t_x^{-1} \in [\EA \otimes_K \ESx]_0=[\EE_x]_{\b0}$ gives the non-zero element $\overline{\alpha}$ of $M_{\b0}$. 
For all $w \in P^*$, $t_w \alpha$ equals the natural image of $\pi_x(t_wt_x) \otimes t_x^{-1} \in \EE_x$ in $M$. Hence, we have $S \overline{\alpha} \cong S_x$.   
The non-zero $S$-homomorphism $M \supset S \overline{\alpha}  \to \EE_z$ which is the composition 
$S \overline{\alpha} \stackrel{\cong}{\too} S_x   \twoheadrightarrow S_z \hookrightarrow \EE_z$
can be extended to $M \to \EE_z$ by the injectivity of $\EE_z$. Composing this map with the natural surjection $\EE_x \twoheadrightarrow M$, we get  $0 \ne \varphi \in [\gHom_S(\EE_x, \EE_z)]_\b0$ with $\varphi(1_x)=0$. 

For all $\psi \in [\gHom_S(\EE_x, \EE_z)]_\b0$
and the above $\varphi \in [\gHom_S(\EE_x, \EE_z)]_\b0$, we have $(\psi+\varphi)|_{S_x}=\psi|_{S_x}$. 
Hence, the map 
$$[\gHom_S(\EE_x, \EE_z)]_\b0 \ni \psi \longmapsto \psi|_{S_x} \in [\gHom_S(S_x, S_z)]_\b0$$ is far from injective, and we have $\dim_K [\gHom_S(\EE_x, \EE_z)]_\b0=\infty$.  
\end{rem}

\begin{dfn}\label{clean map def}
Recall that, for $m \in \NN$, $\EE_x^{-m}$ is the $K$-subspace of $\EE_x$ spanned by $\{  u \otimes v \in \EA \otimes_K \ESx= \EE_x \mid \deg_\ZZ v=-m  \}.$ 
Set 
$$\Lf(\EE_x):=\left[\bigoplus_{i \ge 1} \EE_x^{-i}\right]_{\ge \bf{0}}$$ (this is just a $K$-subspace of $\EE_x$). 

We say $\psi \in [\gHom_S(\EE_x,\EE_z)]_\b0$ is {\it clean}, if 
$\psi(\Lf(\EE_x))\subset \Lf(\EE_z)$. 
\end{dfn}

Note that $[\EE_x]_{\ge \b0} = S_x \oplus \Lf(\EE_x)$ as $\ZZ^n$-graded $K$-vector spaces (not $S$-modules). 
 Clearly, $\Lf(\EE_x)$ depends on choice of a basis of $\EE_x$, 
and the cleanness of $\psi \in  [\gHom_S(\EE_x,\EE_z)]_{\b0}$ depends on bases of $\EE_x$ of $\EE_z$.  

\begin{lem}\label{1x1}
For a non-zero clean map $\psi: \EE_x \to \EE_z$, we have $\psi(1_x) \ne 0$.     
\end{lem}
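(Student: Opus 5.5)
We argue by contradiction: assume $\psi\neq 0$ is clean but $\psi(1_x)=0$, and show that $\psi$ kills every element of $\EE_x$. By Lemma~\ref{S_x -> S_z}, $\psi|_{S_x}=c\pi$ for some $c\in K$. Since $\pi(1_x)=1_z\neq 0$, the hypothesis $\psi(1_x)=0$ gives $c=0$, so $\psi(S_x)=0$. The plan is first to show that $\psi$ vanishes on all of $[\EE_x]_{\ge\b0}=S_x\oplus\Lf(\EE_x)$, and then to use essentiality to get $\psi=0$.

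For the first step, take a homogeneous $\alpha\in\Lf(\EE_x)$ of degree $\ba\ge\b0$. By cleanness, $\psi(\alpha)\in\Lf(\EE_z)$. We show that $\psi(\alpha)=0$ by induction on $m$, the largest integer with a nonzero component of $\alpha$ in $\EE_x^{-m}$. For every $w\in P^*_{-x}$, the element $\wt_w\alpha$ has degree $\ba+\deg t_w\ge\b0$ and lies in $S_x\oplus\bigoplus_{1\le i\le m-1}\EE_x^{-i}$, so by induction $\psi(\wt_w\alpha)=0$. Hence $\wt_w\psi(\alpha)=0$ for all $w\in P^*_{-x}$.

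Next we need to see that such annihilation forces $\psi(\alpha)=0$ inside $\Lf(\EE_z)$. A nonzero element $\beta\in\Lf(\EE_z)$ has a component $u\otimes v$ with $\deg_\ZZ v<0$, so some $w'\in P^*_{-z}$ satisfies $\wt'_{w'}\beta\neq 0$ (the tilde variables taken relative to $z$). Since $z\le x$, we have $P^*_{-x}\subset P^*_{-z}$, and the variables $\wt_w$ relative to $x$ differ from those relative to $z$ only for $w\le x$ with $w\not\le z$. We will choose a component $u\otimes v$ of $\beta$ with $\deg_\ZZ v$ minimal and pick a variable $t_{w'}^{-1}$ occurring in $v$. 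If $w'\not\le x$, then $\wt_{w'}=t_{w'}$ acts on $\EE_z$ as $\wt'_{w'}$, and this gives the contradiction. If $w'\le x$, the variable $w'$ is a $t_{y_i}$ with $i\le r$ or of the form $t_{w'}-\prod t_i$. In this case we instead use $t_i\in S$ for $y_i\le x$ together with the fact that $\Lf(\EE_z)$ is concentrated in degrees $\ge\b0$, so that $v$ cannot involve negative powers in the $t_i$-directions. This step requires care, and we expect it to be the \emph{main obstacle}: the induction hypothesis only controls $\wt_w$ for $w\in P^*_{-x}$, while the socle-type argument in $\EE_z$ naturally uses $P^*_{-z}$.

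If this direct route becomes too messy, we use the alternative for the first step: $\psi(\alpha)=0$ follows from $\wt_w\psi(\alpha)=0$ for all $w\in P^*_{-x}$ combined with the fact that $\psi(\alpha)$ is annihilated by $\fp_x\supseteq$ the relevant ideal. Then the argument of Proposition~\ref{0:I_P} shows that elements of $[\EE_z]_{\ge\b0}$ annihilated by $\fm_{-x}$ (which contains $I_P$-relations modulo $\fp_x$) lie in $S_z$. Since $S_z\cap\Lf(\EE_z)=0$, this gives $\psi(\alpha)=0$.

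Finally, with $\psi([\EE_x]_{\ge\b0})=0$ established, suppose $\psi\neq0$. Then $\Ker\psi$ is a proper submodule, and $\Im\psi\subset\EE_z$ is a nonzero submodule. By Proposition~\ref{ess ext}, $\Im\psi$ meets $S_z$ nontrivially, so there is a homogeneous $\gamma\in\EE_x$ with $0\neq\psi(\gamma)\in S_z$. Multiplying $\gamma$ by a suitable product of the variables $t_i$ ($i\le r$) and $\wt_w$, and using that $\psi(\gamma)\in S_z$ has degree $\ge\b0$, we arrange a homogeneous $\gamma'$ of degree $\ge\b0$ with $\psi(\gamma')\neq0$. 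Concretely, multiply by a power of $t_1\cdots t_r$ (which acts injectively on $S_z$ when $z=x$; in general choose $t_i$ with $y_i\le z$). This contradicts $\psi([\EE_x]_{\ge\b0})=0$, so $\psi=0$, and therefore a nonzero clean $\psi$ must satisfy $\psi(1_x)\neq0$.
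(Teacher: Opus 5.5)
Your argument is correct, but it takes a long detour that the paper avoids. The paper's proof consists essentially of your final paragraph. Essentiality of $\EE_z$ over $S_z$ gives a homogeneous $\gamma\in\EE_x$ with $0\ne\psi(\gamma)\in S_z$. Since $\psi$ preserves degrees, $\deg\gamma=\deg\psi(\gamma)\ge\b0$ automatically, so your multiplication by powers of the $t_i$ is unnecessary. Hence $\gamma\in[\EE_x]_{\ge\b0}=S_x\oplus\Lf(\EE_x)$. Write $\gamma=\gamma_0+\gamma_1$ with $\gamma_0\in S_x$ and $\gamma_1\in\Lf(\EE_x)$. Since $\psi(S_x)=0$ and $\psi$ is clean, $\psi(\gamma)=\psi(\gamma_1)\in S_z\cap\Lf(\EE_z)=0$, a contradiction. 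Cleanness is applied to a single element. Your whole first step is therefore not needed: the claim that $\psi$ kills all of $[\EE_x]_{\ge\b0}$, the induction on the $\EE_x^{-m}$-filtration, and the comparison of the $\wt$-variables for $x$ and $z$ that you flagged as the main obstacle. That obstacle is an artifact of the detour.

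As written, your \emph{alternative} version of the first step is the one that actually works. The condition $\wt_w\psi(\alpha)=0$ for all $w\in P^*_{-x}$ means $\psi(\alpha)$ is annihilated by $\fm_{-x}S=\fp_x\supseteq I_P$. By the statement of Proposition~\ref{0:I_P} (not ``its argument''), $\psi(\alpha)\in(0:_{\EE_z}I_P)_{\ge\b0}=S_z$, and cleanness then forces $\psi(\alpha)\in S_z\cap\Lf(\EE_z)=0$. Your phrase that $\fm_{-x}$ ``contains $I_P$-relations modulo $\fp_x$'' should be replaced by this inclusion.

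The \emph{direct} route can also be finished by a degree count. Suppose $z\in[y_1\vee\cdots\vee y_l]$ and $\beta\in\Lf(\EE_z)$ is nonzero of degree $\ge\b0$. Then $\beta$ can only involve $t_w^{-1}$ where all atoms of $w$ lie among $y_1,\dots,y_l$. For such $w$, $w\le x$ implies $w\le z$, by uniqueness of the element of $[U]$ below $x$. So the $\wt_w$ for $x$ and for $z$ coincide.

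Your route buys nothing extra, since the lemma's conclusion $\psi=0$ already subsumes the intermediate vanishing statement. It also brings in Proposition~\ref{0:I_P}, which the paper's proof does not need. The paper's argument is strictly more economical: it uses only essentiality and the decomposition $[\EE_x]_{\ge\b0}=S_x\oplus\Lf(\EE_x)$.
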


\begin{proof}
For contradiction, assume that $\psi(1_x)=0$, or equivalently, $\psi|_{S_x}=0$.  
Since $\EE_z$ is a *essential extension of $S_z$, we have $\Im \psi \cap S_z \ne 0$.  So we have  a homogeneous element $\alpha \in \EE_x$ with $0 \ne \psi(\alpha) \in S_z$. By the present assumption that $\psi|_{S_x}=0$, we may assume that $\alpha \in \Lf(\EE_x)$. It contradicts the cleanness of $\psi$. 
\end{proof}

\begin{lem}\label{degree 0}
A morphism $\psi \in  [\gHom_S(\EE_x,\EE_z)]_{\b0}$ is clean, 
if  $\psi(\alpha)  \in \Lf(\EE_z)$ holds for all $\alpha \in [\Lf(\EE_x)]_{\b0}$. 
\end{lem}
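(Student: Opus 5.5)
The idea is to reduce an arbitrary homogeneous element of $\Lf(\EE_x)$ to one of degree $\b0$ by dividing by a monomial in the $t_i$. Since $\Lf(\EE_x)$ and $\Lf(\EE_z)$ are $\ZZ^n$-graded $K$-subspaces and $\psi$ preserves degree, it suffices to check $\psi(\alpha) \in \Lf(\EE_z)$ for homogeneous $\alpha \in \Lf(\EE_x)$ with $\deg \alpha = \ba \ge \b0$.

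We may assume $\psi \ne 0$, so that $z \le x$. Arrange that $x \in [y_1 \vee \cdots \vee y_r]$ and $z \in [y_1 \vee \cdots \vee y_s]$ with $s \le r$. We may also assume $\psi(\alpha) \ne 0$, since otherwise there is nothing to prove.
\begin{itemize}
\item By Remark~\ref{E_x rem}~(4), applied to $\EE_z$, a nonzero element of $[\EE_z]_\ba$ forces $a_{s+1} = \cdots = a_n \le 0$.
\item Combined with $\ba \ge \b0$, this gives $a_{s+1} = \cdots = a_n = 0$.
\end{itemize}
Hence $\ba = (a_1, \ldots, a_s, 0, \ldots, 0)$ with all $a_i \ge 0$, and we set $\bt^\ba := \prod_{i=1}^s t_i^{a_i} \in S$.

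Since $s \le r$, we can use the $K[t_1^{\pm 1}, \ldots, t_r^{\pm 1}]$-module structure \eqref{E_x as a module over Laurent} of $\EE_x$ and put $\beta := \bt^{-\ba} \cdot \alpha$. Multiplication by a Laurent monomial in $t_1, \ldots, t_r$ only changes the $\EA$-factor of each monomial $u \otimes v$ and leaves $v$ untouched. Therefore $\beta$ still lies in $\bigoplus_{i \ge 1} \EE_x^{-i}$, and it has degree $\b0$, so $\beta \in [\Lf(\EE_x)]_\b0$.

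It remains to check that the action of $\bt^\ba \in S$ on $\EE_x$ agrees with this Laurent action. For $i \le r$ we have $t_i = 0$ in $S_{-x}$, so $\Delta(t_i) = t_i \otimes 1 + 1 \otimes t_i$ acts as $t_i \otimes 1$. Hence $\alpha = \bt^\ba \cdot \beta$ as $S$-module elements.

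By $S$-linearity, $\psi(\alpha) = \bt^\ba \cdot \psi(\beta)$. By hypothesis, $\psi(\beta) \in [\Lf(\EE_z)]_\b0$. Since $i \le s$, we have $t_i = 0$ in $S_{-z}$, so $t_i$ acts on $\EE_z$ via $t_i \otimes 1$. Thus multiplication by $\bt^\ba$ maps each monomial $u \otimes v$ of $\EE_z$ to $(\bt^\ba u) \otimes v$.
\begin{itemize}
\item It preserves the summands $\EE_z^{-i}$, since $v$ is unchanged.
\item It raises the degree by $\ba \ge \b0$.
\end{itemize}
Consequently $\bt^\ba \cdot [\Lf(\EE_z)]_\b0 \subset [\Lf(\EE_z)]_\ba$, and so $\psi(\alpha) \in \Lf(\EE_z)$, as required.

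The only delicate point is the degree bookkeeping. We must use that $\psi(\alpha) \ne 0$ forces $\ba$ to be supported on $\{1, \ldots, s\}$. This is what guarantees that $\bt^\ba$ involves only variables that act purely on the Laurent factor in both $\EE_x$ and $\EE_z$.
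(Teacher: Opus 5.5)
Your proposal is correct and is essentially the paper's proof. You discard degrees $\ba$ with a positive entry outside the atoms of $z$ (there $[\EE_z]_\ba=0$), divide $\alpha$ by $\prod_{i\le s}t_i^{a_i}$ to land in $[\Lf(\EE_x)]_{\b0}$, apply the hypothesis, and multiply back using $S$-linearity. Your explicit checks that each $t_i$ acts only on the Laurent factor of $\EE_x$ and $\EE_z$ fill in details the paper leaves implicit. Note that from Remark~\ref{E_x rem}~(4) you only need $[\EE_z]_\ba\neq 0 \Rightarrow a_j\le 0$ for all $j>s$, which is true and suffices.
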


\begin{proof}
 We may assume that $x \in [y_1 \vee \cdots \vee y_r]$ and  $z \in [y_1 \vee \cdots \vee y_l]$ for $r \ge l$. Note that $\EE_x$ is (resp. $\EE_z$) is a  $\ZZ^r$-graded (resp. $\ZZ^l$-graded) free module over $K[t_1^{\pm 1}, \ldots, t_r^{\pm 1}]$ (resp. $K[t_1^{\pm 1}, \ldots, t_l^{\pm 1}]$), and $\varphi:\EE_x \to \EE_z$ is $K[t_1^{\pm 1}, \ldots, t_l^{\pm 1}]$-linear. 

It suffices to show that $\psi(\alpha)  \in \Lf(\EE_z)$ for all homogeneous elements $\alpha \in \Lf(\EE_x)$. 
If the $i$-th component of $\ba:=\deg \alpha \in \ZZ^n$ is positive for some $i >l$, then we have $[\EE_z]_\ba=0$ and hence $\psi(\alpha)=0$. So we may assume that the $i$-th component of $\deg \alpha \in \ZZ^n$ is $0$ for all $i >l$, that is, $\ba=(a_1, \ldots, a_l, 0, \ldots, 0) \in \ZZ^n$. Then $\alpha':=\left( \prod_{i=1}^l t_i^{-a_i}\right) \cdot \alpha \in  [\Lf(\EE_x)]_{\b0}$. 
By the assumption, we have $\psi(\alpha') \in \Lf( \EE_z)$. Hence $$\psi(\alpha)=\left( \prod_{i=1}^l t_i^{a_i} \right) \cdot \psi(\alpha') \in \Lf(\EE_z).$$ 
\end{proof}

\begin{rem}\label{rank=1}
Since $[\EE_{y_i}]_{\b0}=[S_{y_i}]_{\b0}=K$, $\varphi \in [\gHom_S(\EE_{y_i}, \EE_{\hat{0}})]_{\b0}$ is always clean by the above lemma. 
\end{rem}

\begin{lem}\label{restriction to S_x}
If $\varphi, \psi \in [\gHom_S(\EE_x,\EE_z)]_{\b0}$ are clean and $\varphi |_{S_x}= \psi|_{S_x}$ (equivalently,  $\varphi(1_x)= \psi(1_x)$), we have $\varphi = \psi$.  
\end{lem}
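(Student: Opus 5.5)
Set $\theta:=\varphi-\psi \in [\gHom_S(\EE_x,\EE_z)]_{\b0}$. The plan is to show that $\theta$ is again clean and vanishes on $S_x$, and then conclude $\theta=0$ from Lemma~\ref{1x1}.

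\emph{Cleanness of $\theta$.} This is immediate from linearity. The set $\Lf(\EE_z)$ is a $K$-subspace of $\EE_z$, and both $\varphi$ and $\psi$ map $\Lf(\EE_x)$ into it. Hence for $\alpha \in \Lf(\EE_x)$,
$$\theta(\alpha)=\varphi(\alpha)-\psi(\alpha)\in \Lf(\EE_z).$$
So $\theta$ satisfies Definition~\ref{clean map def}.

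\emph{Vanishing on $S_x$.} The hypothesis $\varphi|_{S_x}=\psi|_{S_x}$ gives $\theta|_{S_x}=0$, and in particular $\theta(1_x)=0$. The parenthetical equivalence in the statement also holds. Since $S_x=S\cdot 1_x$ inside $\EE_x$, the identity $\varphi(1_x)=\psi(1_x)$ forces $\varphi(f1_x)=f\varphi(1_x)=f\psi(1_x)=\psi(f1_x)$ for every $f\in S$.

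\emph{Conclusion.} Suppose $\theta\neq 0$. Then $\theta$ is a non-zero clean map with $\theta(1_x)=0$, which contradicts Lemma~\ref{1x1}. Therefore $\theta=0$, that is, $\varphi=\psi$.

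I expect no real obstacle here. The only point that needs checking is that cleanness is preserved under differences, and this holds because $\Lf(\EE_z)$ is a subspace: a direct sum of graded pieces $\EE_z^{-i}$, $i \ge 1$, restricted to degrees $\ge \b0$.
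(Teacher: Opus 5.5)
Your proposal is correct and follows the paper's proof exactly: set $\varphi-\psi$, note it is clean (since $\Lf(\EE_z)$ is a $K$-subspace) and kills $1_x$, then conclude $\varphi-\psi=0$ from Lemma~\ref{1x1}. Your added check that $S_x=S\cdot 1_x$, which justifies the parenthetical equivalence, is also fine.
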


\begin{proof}
Note that $f :=\varphi -\psi \in  [\gHom_S(\EE_x,\EE_z)]_{\b0}$ is clean,  and $f(1_x)=0$. By Lemma~\ref{1x1}, we have $f=0$ and hence $\varphi = \psi$. 
\end{proof}

\begin{lem}\label{clean basic}
For  $\varphi \in [\gHom_S(\EE_x,\EE_z)]_\b0$ and  $\psi \in [\gHom_S(\EE_z,\EE_w)]_\b0$, we have the following, 
\begin{itemize}
\item[(1)] If $\varphi$ and $\psi$ are clean, the composition $\psi \circ \varphi$ is clean.  
\item[(2)] If $\psi$ and $\psi \circ \varphi$ are clean and $\psi \ne 0$, then $\varphi$ is clean.
\end{itemize}

\end{lem}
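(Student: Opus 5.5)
Part (1) follows directly from the definition. If $\alpha \in \Lf(\EE_x)$, cleanness of $\varphi$ gives $\varphi(\alpha) \in \Lf(\EE_z)$, and cleanness of $\psi$ then gives $\psi(\varphi(\alpha)) \in \Lf(\EE_w)$. Since $\psi\circ\varphi$ is again a degree-preserving $S$-homomorphism, it is clean.

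For part (2), take $\alpha \in \Lf(\EE_x)$ and decompose $\varphi(\alpha)$ using the $K$-vector space splitting $[\EE_z]_{\ge \b0} = S_z \oplus \Lf(\EE_z)$. By Lemma~\ref{degree 0} we may take $\alpha$ homogeneous of degree $\b0$, so $\varphi(\alpha)\in[\EE_z]_{\b0}$. Write $\varphi(\alpha)=\beta+\gamma$ with $\beta\in S_z$ and $\gamma\in\Lf(\EE_z)$. The aim is $\beta=0$.

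Apply $\psi$ to this decomposition. By cleanness of $\psi\circ\varphi$ we have $\psi(\varphi(\alpha))\in\Lf(\EE_w)$, and by cleanness of $\psi$ we have $\psi(\gamma)\in\Lf(\EE_w)$. Hence $\psi(\beta)\in\Lf(\EE_w)$. On the other hand, $\beta\in S_z$, so Lemma~\ref{S_x -> S_z} gives $\psi(\beta)\in S_w$. Since $S_w\cap\Lf(\EE_w)=0$, we get $\psi(\beta)=0$.

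It remains to show that $\psi$ is injective on $S_z$, i.e., that $\psi(\beta)=0$ forces $\beta=0$. This is the main point. Since $\psi\neq 0$ is clean, Lemma~\ref{1x1} gives $\psi(1_z)=c\cdot 1_w$ with $c\neq 0$. So $\psi|_{S_z}=c\pi$, where $\pi:S_z\to S_w$ is the canonical surjection. This map is not injective in general when $w<z$, so more care is needed.

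The fix is to use the degree. Here $\beta\in[S_z]_{\b0}$, and $S_z\cong K[t_i\mid y_i\le z]$ is a polynomial ring in variables of nonzero degrees $\be_i$. Therefore $[S_z]_{\b0}=K\cdot 1_z$, so $\beta=c'1_z$ for some $c'\in K$. Then $\psi(\beta)=c'c\,1_w=0$ with $c\neq0$ forces $c'=0$, hence $\beta=0$. Thus $\varphi(\alpha)=\gamma\in\Lf(\EE_z)$ for every $\alpha\in[\Lf(\EE_x)]_{\b0}$, and Lemma~\ref{degree 0} shows that $\varphi$ is clean. The reduction to degree $\b0$ via Lemma~\ref{degree 0} is exactly what avoids the non-injectivity of $\pi$ on $S_z$.
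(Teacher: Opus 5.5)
Your proof is correct and follows the paper's argument. You reduce via Lemma~\ref{degree 0} to $\alpha\in[\Lf(\EE_x)]_{\b0}$, split $\varphi(\alpha)=\beta+\gamma$ with $\beta\in[S_z]_{\b0}=K\cdot 1_z$, and conclude $\beta=0$ from $\psi(1_z)\neq 0$ (Lemmas~\ref{S_x -> S_z} and~\ref{1x1}) together with $S_w\cap\Lf(\EE_w)=0$. The only differences are that the paper argues by contradiction and takes $\beta=c\cdot 1_z$ without comment, whereas you argue directly and spell out the degree-$\b0$ reason why $\psi$ cannot kill $\beta$.
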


\begin{proof}
(1) Clear. 

(2) For contradiction, assume that $\varphi$ is not clean. By Lemma~\ref{degree 0}, there is some  $\alpha \in  [\Lf(\EE_x)]_{\mathbf 0}$ with  $\varphi(\alpha) \not \in \Lf(\EE_z)$. 
Take $0 \ne \beta = c\cdot 1_z \in S_z$ and $\gamma \in \Lf(\EE_z)$ with $\psi(\alpha)=\beta+\gamma$.  We have $0 \ne \psi(\beta) \in S_w$ by Lemmas~\ref{S_x -> S_z} and \ref{1x1}, 
and $\psi(\gamma) \in \Lf(\EE_w)$ by the cleanness of $\psi$. Hence 
 $$\psi \circ \varphi(\alpha)=\psi(\varphi(\alpha))=\psi(\beta+\gamma)=\psi(\beta)+\psi(\gamma) \not \in \Lf(\EE_w),$$
and it contradicts the assumption that  $\psi \circ \varphi$ is clean. 
\end{proof}

By Remark~\ref{rank=1} and Lemma~\ref{restriction to S_x}, 
 a $\ZZ^n$-graded $S$-homomorphism $\EE_{y_i} \to \EE_{\widehat{0}}$ is unique up to a scalar multiplication. 
For $y =y_i$, $$\{ t_y^a \otimes \prod_{z \in P^*_{-y}}t_z^{-b_z} \mid a \in \ZZ, b_z \in \NN \, (z \in P^*_{-y})\, \}$$ forms a basis of $\EE_y$, and  
$\psi \in [\gHom_S(\EE_y, \EE_{\widehat{0}})]_{\b0}$ with $\psi(1_y)=1_{\widehat{0}}$ satisfies 
$$\psi( t_y^a \otimes \prod_{z \in P^*_{-y}}t_z^{-b_z})= 1 \otimes \prod_{z \in P^*_{-y}}t_z^{-b_z}t_y^a,$$
where the right hand side is 0 if $a >0$.  

For $x,x' \in P^*$ such that $x$ covers $x'$ (hence $\rank_P x \ge 2$), we will construct a clean $\ZZ^n$-graded $S$-homomorphism $\varphi: \EE_x \to \EE_{x'}$. We may assume that $x \in [y_1 \vee \cdots \vee y_r]$ and $x' \in [y_1 \vee \cdots \vee y_{r-1}]$.  
Set 
\begin{equation}\label{def of Z}
Z:= \{ z \in P \mid z \le x, z \not \le x', z \ne y_r \}
\end{equation}
and 
\begin{equation}\label{def of W}
W:= P^* \setminus (Z \cup \{y_1, \ldots, y_r\}).
\end{equation}
Note that $$P^*=  \{y_1, \ldots, y_r\} \sqcup Z \sqcup W.$$
For $z \in P^*$ with $z \le x$, $z \in Z$ if and only if $z > y_r$. 

Consider a Laurent monomial $\prod_{i=1}^r t_i^{a_i} \in K[t_1^{\pm 1}, \ldots, t_r^{\pm 1}]$, $\bt_Z^{-\bb} :=\prod_{z \in Z} t_z^{-b_z}$ for $\bb =(b_z)_{z \in Z} \in \NN^Z$ and $\bt_W^{-\bc} :=\prod_{w \in W} t_w^{-c_w}$ for $\bc=(c_w)_{w \in W} \in \NN^W$. Then the set of monomials of the form 
\begin{equation}\label{alpha}
\alpha= \left (\prod_{i=1}^r t_i^{a_i} \right )
\otimes \bt_Z^{-\bb} \bt_W^{-\bc}
\end{equation}
forms a $K$-basis of $\EE_x$. 
For a monomial $\bt_Z^{\bd} \in S$ in the variables in $Z$, we have $(e_1, \ldots, e_r) \in \ZZ^r$ such that 
$$
\pi_x(\bt_Z^{\bd})\cdot \prod_{i=1}^r t_i^{a_i}  =\prod_{i=1}^rt_i^{e_i}.$$
Then we set 
$$(\bt^{\bd}_Z | \alpha):=
\displaystyle \prod_{z \in Z} \binom{b_z+d_z}{b_z} \cdot\left( \prod_{i=1}^{r-1}t_i^{e_i}  \otimes \bt_Z^{-\bb-\bd} \cdot \bt_W^{-\bc} \cdot t_r^{e_r} \right) \in \EE_{x'}.
$$
Here we interpret $(\bt^{\bd}_Z | \alpha)=0$ if $e_r>0$. 
Note that the monomial $(\bt^{\bd}_Z | \alpha)$ has the same degree as $\alpha$. 
Now we define $\psi: \EE_x \to \EE_{x'}$ by 
\begin{equation}\label{def of varphi}
\varphi(\alpha):=\sum_{\bd\in \NN^Z} (\bt^{\bd}_Z | \alpha).
\end{equation}
If $\sum_{z \in Z}d_z > a_r$, then $(\bt^{\bd}_Z | \alpha)=0$. 
Hence, the right side of \eqref{def of varphi} is a finite sum.  
\begin{exmp}
For the simplicial poset $P_1$ of Example~\ref{running}, the *injective envelopes $\EE_x$ and $\EE_{y_1}$ have bases 
$\{t_1^{a_1}t_2^{a_2} \otimes t_x^{-b} t_z^{-c} \mid a_1, a_2 \in \ZZ, b,c \in \NN \}$ and $\{t_1^{a_1} \otimes t_x^{-b} t_z^{-c} t_2^{a_2} \mid a_1 \in \ZZ, -a_2, b,c \in \NN \}$ respectively. With the above notation, for $\EE_x$, we have $Z=\{x\}$ and  $W=\{z\}$, and hence 
$$\psi(t_1^{a_1}t_2^{a_2} \otimes t_x^{-b} t_z^{-c})=\sum_{0 \le d \le -a_2}\binom{b+d}{b} t_1^{a_1+d} \otimes  t_x^{-b-d}t_z^{-c}t_2^{a_2+d}.$$  
\end{exmp}

\begin{prop}\label{description of clean maps}
The above $\psi$ is a $\ZZ^n$-graded $S$-homomorphism. Hence $\psi:\EE_x \to \EE_{x'}$ is a clean map.     
\end{prop}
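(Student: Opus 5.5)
We need to show $\psi$ commutes with multiplication by each $t_w$, $w\in P^*$, and preserves degree; cleanness then follows quickly. Degree preservation is already built in: each $(\bt^\bd_Z|\alpha)$ has the same degree as $\alpha$. For cleanness, observe that if $\alpha$ has $\bb\ne\b0$ or $\bc\ne\b0$ (i.e.\ $\alpha\in\bigoplus_{i\ge1}\EE_x^{-i}$), then every term $(\bt^\bd_Z|\alpha)$ carries $\bt_Z^{-\bb-\bd}\bt_W^{-\bc}$, which is nontrivial, so it lies in $\bigoplus_{i\ge1}\EE_{x'}^{-i}$. Since degrees are preserved, $\psi(\Lf(\EE_x))\subset\Lf(\EE_{x'})$. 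Here we should note that the factor $t_r^{e_r}$ with $e_r\le 0$ sits in the second tensor factor of $\EE_{x'}$, because $y_r\in P^*_{-x'}$. Thus it also contributes nonnegative $\ZZ$-degree loss, and the case $\bb=\bc=\b0$, $\bd\neq\b0$ is harmless for this inclusion. So the substance is $S$-linearity.

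For $S$-linearity, I would work with the transformed variables. On $\EE_x$, use $\wt$ relative to $x$; on $\EE_{x'}$, use $\wt'$ relative to $x'$. The generators are $t_1,\dots,t_r$, $t_w$ for $w\in W$, and $t_z$ for $z\in Z$. I would treat them in three cases.

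\textbf{Case (a): $t_i$ with $i\le r-1$.} Both actions simply shift the exponent $a_i$, hence $e_i$, and $\psi$ visibly commutes.

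\textbf{Case (b): $w\in W$.} Either $w\not\le x$, or $w\le x'$ with $\rank w\ge2$. In the first case $t_w$ acts on $\EE_x$ by lowering $c_w$, and acts on $\EE_{x'}$ in the same way, since $w\not\le x'$ and no product term appears. In the second case $t_w=\wt_w+\prod_{y_i\le w}t_i$ on both sides with the same product, as all $i\le r-1$, so linearity reduces to case (a) plus the $c_w$-lowering.

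\textbf{Case (c): $t_r$ and $t_z$ with $z\in Z$.} This is the main obstacle, since these variables act differently on the two sides. On $\EE_x$, $t_z$ acts by
\[
\alpha\mapsto \pi_x(t_z)\alpha+(\text{lower }b_z),
\]
with $\pi_x(t_z)=\prod_{y_i\le z}t_i$ involving $t_r$. On $\EE_{x'}$, $t_z$ acts only by lowering the exponent of $t_z^{-1}$, because $z\not\le x'$, while $t_r$ acts by raising the exponent of $t_r$ inside the inverse-system factor.

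For $t_r$ I would just compute. On $\EE_x$, $t_r$ raises $a_r$ by one. On $\EE_{x'}$, $t_r$ raises $e_r$ in every term. Since $e_r$ depends on $\bd$ and $a_r$ only through $a_r+\sum_{z}d_z$, the two expressions agree termwise, including the vanishing convention when $e_r>0$.

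For $t_z$ with $z\in Z$, expand both $\psi(t_z\alpha)$ and $t_z\psi(\alpha)$ as sums over $\bd$ and compare coefficients of a fixed target monomial $\prod_{i<r}t_i^{e_i}\otimes\bt_Z^{-\bb-\bd}\bt_W^{-\bc}t_r^{e_r}$. The left side has two contributions. The first comes from $\pi_x(t_z)\alpha$, shifting $\bd$ by $\mathbf e_z$, with coefficient $\binom{b_z+d_z-1}{b_z}$. The second comes from lowering $b_z$ to $b_z-1$, with coefficient $\binom{b_z-1+d_z+1}{b_z-1}$, reindexed. The right side, lowering the exponent of $t_z^{-1}$ in $(\bt^{\bd}_Z|\alpha)$, gives $\binom{b_z+d_z}{b_z}$ after matching $\bd$. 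The identity needed is Pascal's rule
\[
\binom{b_z+d_z}{b_z}=\binom{b_z+d_z-1}{b_z}+\binom{b_z+d_z-1}{b_z-1},
\]
with the factors for $z'\neq z$ unchanged, together with the boundary cases $b_z=0$ and $d_z=0$ where a term vanishes. I expect the bookkeeping of indices and the $t_r$-exponent vanishing conventions to be the delicate part, but the binomial coefficients were clearly chosen for Pascal's rule to close it.

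Once $\psi$ is known to be a $\ZZ^n$-graded $S$-homomorphism, cleanness is the observation made in the first paragraph. As an alternative check, Lemma~\ref{degree 0} lets one verify the inclusion $\psi(\alpha)\in\Lf(\EE_{x'})$ only in degree $\b0$.
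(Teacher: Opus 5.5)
Your proposal is correct and follows the paper's proof essentially step for step: you check linearity separately for $t_i$, for $t_w$ with $w\in W$ (where the product correction in $\wt_w$ is the same relative to $x$ and to $x'$), and for $t_z$ with $z\in Z$ via the splitting $t_z\alpha=\alpha'+\alpha''$ and Pascal's rule, and you then read cleanness off the explicit formula. One small bookkeeping slip: against the same target monomial as your first coefficient $\binom{b_z+d_z-1}{b_z}$, the contribution of $\alpha''$ comes from $(\bt^{\bd}_Z|\alpha'')$ with the same $\bd$ and has coefficient $\binom{b_z+d_z-1}{b_z-1}$ (not $\binom{b_z+d_z}{b_z-1}$), which is exactly the term your Pascal identity uses.
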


\begin{proof}
The latter assertion is immediate from the former. We will therefore prove the former. 
With the above notation, for $w \in W$, the element 
$$\wt_w= \begin{cases}t_w - \prod_{y_i \le w} t_i & \text{if $w \le x$ (equivalently, $w \le x'$),}\\
t_w & \text{otherwise,}
\end{cases}
$$
defined in \eqref{wt} works well for both $\EE_x$ and $\EE_{x'}$. 
So we have $\psi(\wt_w\alpha)=\wt_w\psi(\alpha)$. Similarly, we have 
$\psi(t_i\alpha)=t_i \psi(\alpha)$ for all $1 \le i \le r$ (the case $i=r$ is somewhat exceptional, but still easy). 
It remains to show that $\psi(t_{z_0}\alpha)=t_{z_0}\psi(\alpha)$ for all $z_o \in Z$ and all monomial $\alpha\in \EE_x$ in \eqref{alpha}. 
We have $t_{z_0}\alpha= \alpha'+\alpha''$, where 
$$\alpha'= \pi_x(t_{z_0}) \cdot \prod_{i=1}^r t_i^{a_i} \otimes \bt_Z^{-\bb} \bt_W^{-\bc} \quad \text{and} \quad 
\alpha''= \prod_{i=1}^r t_i^{a_i} 
\otimes t_{z_0}  \bt_Z^{-\bb} \bt_W^{-\bc}.$$
For simplicity, we set 
$$u\otimes v=  \prod_{i=1}^{r-1}t_i^{e_i} \otimes \bt_Z^{-\bb-\bd} \cdot \bt_W^{-\bc} \cdot t_r^{e_r}  \in \EE_{x'}.$$
Then we have 
$$(\bt^{\bd}_Z | \alpha)=C \cdot \binom{b_{z_0}+d_{z_0}}{b_{z_0}} \cdot (u \otimes v)$$
and 
$$t_{z_0}(\bt^{\bd}_Z | \alpha)=C \cdot \binom{b_{z_0}+d_{z_0}}{b_{z_0}} \cdot (u \otimes t_{z_0} v),$$
where
$$C:=\prod_{z_0 \ne z \in Z} \binom{b_z+d_z}{b_z}.$$ 
If $d_{z_0} >0$, set $\bd':= \bd-\be_{z_0} \in \NN^Z$,  
then we have $$(\bt_Z^{\bd'}|\alpha')=C\cdot
\binom{b_{z_0}+d_{z_0}-1}{b_{z_0}} \cdot (u \otimes t_{z_0} v).$$
Similarly, we have 
$$(\bt_Z^\bd | \alpha'')=C \cdot \binom{b_{z_0}-1+d_{z_0}}{b_{z_0}-1} \cdot (u \otimes t_{z_0} v).$$
Hence, if $d_{z_0} >0$, we have 
$$t_{z_0}(\bt^{\bd}_Z \alpha)=(\bt_Z^{\bd'}|\alpha')+(\bt_Z^\bd | \alpha'').$$
If $d_{z_0}=0$, then we have  
$$t_{z_0} \cdot (\bt_Z^\bd | \alpha)=(\bt_Z^\bd|\alpha'')=C \cdot (u \otimes t_{z_0} v)$$
(if $b_{z_0}$  is also 0, the expressions in the above equation are all 0.) Hence, we have $\psi(t_{z_0}\alpha)=t_{z_0}\psi(\alpha)$. 
\end{proof}

\begin{thm}\label{uniqueness}
For $x, z \in P$ with $x \ge z$, there is a clean map $\EE_x \to \EE_z$, which is unique up to a scalar multiplication.     \end{thm}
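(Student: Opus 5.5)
Uniqueness is already available from earlier results. If $\varphi,\psi:\EE_x\to\EE_z$ are clean, then by Lemma~\ref{S_x -> S_z} we have $\varphi(1_x)=c\cdot 1_z$ and $\psi(1_x)=c'\cdot 1_z$ for some $c,c'\in K$. By Lemma~\ref{1x1}, a nonzero clean map has $c\neq 0$. So if $\psi\neq 0$, the map $\varphi-(c/c')\psi$ is clean and kills $1_x$, hence vanishes by Lemma~\ref{restriction to S_x}; and if $\psi=0$ there is nothing to prove. (If $x=z$, the identity map is clean.) Thus a clean map is unique up to a scalar, and the real content of the theorem is existence.

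For existence I will build the map by composing along a saturated chain and invoke Lemma~\ref{clean basic}(1). Since $P$ is simplicial and $x\ge z$, the interval $[z,x]$ lies inside the boolean algebra $\{w\mid w\le x\}$. Hence there is a chain $x=x_0> x_1>\cdots> x_k=z$ in which each $x_{j}$ covers $x_{j+1}$. For each covering step with $x_{j+1}\neq\hat0$, we have $\rank x_j\ge 2$, and Proposition~\ref{description of clean maps} supplies a clean map $\EE_{x_j}\to\EE_{x_{j+1}}$. If the last step goes to $z=\hat0$, then $x_{k-1}=y_i$ has rank $1$, and Remark~\ref{rank=1} shows that any $\ZZ^n$-graded map $\EE_{y_i}\to\EE_{\hat0}$ is clean. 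The explicit map written after Lemma~\ref{clean basic} is such a map and is nonzero. By Lemma~\ref{clean basic}(1), the composite $\EE_x\to\EE_z$ is clean.

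The main obstacle is that this composite could be zero, which would make the existence claim vacuous. To rule this out I will check $1_x\mapsto 1_z$ at each step. In Proposition~\ref{description of clean maps}, take $\alpha=1_x$, so all $a_i=0$ and $\bb,\bc=0$. The only term in \eqref{def of varphi} with $\sum d_z\le a_r=0$ is $\bd=0$, and it gives $1_{x'}$; the rank-one map likewise sends $1_{y_i}$ to $1_{\hat0}$. Hence the composite sends $1_x$ to $1_z\neq 0$, so it is nonzero. A subtlety is that cleanness depends on the chosen monomial bases of each $\EE_{x_j}$. The basis used for $\EE_x$ in \eqref{alpha} is the standard monomial basis of \eqref{monomials in E_x}, only with the variables regrouped into $Z$ and $W$. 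The image in $\EE_{x'}$ is likewise written in its standard basis. So $\Lf(\cdot)$ is the same subspace at every step, and the cleanness conditions compose consistently.
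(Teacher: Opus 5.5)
Your proposal is correct and follows essentially the same route as the paper: compose the clean covering maps of Proposition~\ref{description of clean maps} along a saturated chain using Lemma~\ref{clean basic}(1), and deduce uniqueness from Lemma~\ref{restriction to S_x}. You also fill in points the paper leaves implicit: you handle a final step into $\EE_{\hat{0}}$ via Remark~\ref{rank=1}, since the proposition only covers $x'\in P^*$, and you check that $1_x\mapsto 1_z$, so the composite is nonzero.
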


\begin{proof}
If $x=z$, then the identity map of $\EE_x$ is clean. 
If $x>z$, we take a sequence $x_0 =x, x_1, x_2, \cdots, x_l=z \in P$ such that $x_i$ covers $x_{i+1}$ for each $0 \le i < l$. By Proposition~\ref{description of clean maps}, we have a clean map $f_i :\EE_{x_i} \to \EE_{x_{i+1}}$. Then the  composition $f_{l-1} \circ \cdots \circ f_1 \circ f_0$ gives a clean map $\EE_x \to \EE_z$ by Lemma~\ref{clean basic} (1). 
The uniqueness (up to a scalar multiplication) follows from Lemma~\ref{restriction to S_x}.  
\end{proof}

For an element $\omega= \sum c_\beta \beta \in \EE_x$ ($\beta$ is a monomial,  $c_\beta \in K$), set $[\omega : \beta]= c_\beta$.

\begin{lem}\label{t_m^-a} 
For a non-zero clean map  $\psi:\EE_x\to \EE_{z}$ and a monomial $\alpha \in \EE_x$, 
the following are equivalent. 
\begin{itemize}
\item[(1)] $[\psi(\alpha): 1_z] \ne 0$, 
\item[(2)] $\alpha=  1_x$.  
\end{itemize}
\end{lem}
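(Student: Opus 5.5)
The direction (2)$\Rightarrow$(1) follows from what is already established. By Lemma~\ref{1x1}, $\psi(1_x)\neq 0$. By Lemma~\ref{S_x -> S_z}, $\psi(1_x)=c\cdot 1_z$ for some $c\in K$, so $c\neq 0$. Hence $[\psi(1_x):1_z]=c\neq 0$.

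For (1)$\Rightarrow$(2), let $\alpha$ be a monomial with $[\psi(\alpha):1_z]\neq 0$. Since $\psi$ is $\ZZ^n$-graded and $1_z$ has degree $\b0$, we must have $\deg\alpha=\b0$. Write $\alpha=u\otimes v$ with $u\in \EA$ and $v\in\ESx$ monomials. I split into two cases according to $\deg_\ZZ v$.

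\emph{Case $\deg_\ZZ v<0$.} Then $\alpha\in \EE_x^{-m}$ for some $m\ge 1$. Together with $\deg\alpha=\b0$, this gives $\alpha\in[\Lf(\EE_x)]_{\b0}$. By cleanness, $\psi(\alpha)\in\Lf(\EE_z)$. Every monomial appearing in an element of $\Lf(\EE_z)$ has a nontrivial second tensor factor, so the coefficient of $1_z=1\otimes 1$ is zero. This contradicts (1).

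\emph{Case $\deg_\ZZ v=0$.} Then $v=1$ and $\alpha=u\otimes 1$ with $u=\prod_{i=1}^r t_i^{a_i}$ of degree $\b0$ (assuming $x\in[y_1\vee\cdots\vee y_r]$ as usual). The degree of $u\otimes 1$ is $(a_1,\dots,a_r,0,\dots,0)$. So $\deg\alpha=\b0$ forces all $a_i=0$, and hence $\alpha=1_x$.

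The only point needing care is the claim that elements of $\Lf(\EE_z)$ have zero $1_z$-coefficient. This is a bookkeeping step: $\Lf(\EE_z)$ is spanned by monomials $u'\otimes v'$ with $\deg_\ZZ v'\le -1$, and $1_z$ is not among them. The key input is the decomposition $[\EE_z]_{\ge\b0}=S_z\oplus\Lf(\EE_z)$ as $K$-spaces, which is compatible with the monomial basis. Beyond this, the argument is a direct combination of Lemmas~\ref{S_x -> S_z} and \ref{1x1} with Definition~\ref{clean map def}.
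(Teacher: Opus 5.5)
Your proof is correct and is the paper's argument: (2)$\Rightarrow$(1) follows from Lemmas~\ref{S_x -> S_z} and \ref{1x1}, and for (1)$\Rightarrow$(2) gradedness forces $\deg\alpha=\b0$, so either $\alpha=1_x$ or $\alpha\in[\Lf(\EE_x)]_{\b0}$, and in the latter case cleanness makes the $1_z$-coefficient vanish. Your case split on $\deg_\ZZ v$ simply writes out the paper's one-line observation that a degree-$\b0$ monomial other than $1_x$ lies in $\Lf(\EE_x)$.
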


\begin{proof}
(2) $\Rightarrow$ (1): Follows from Lemmas~\ref{S_x -> S_z} and \ref{1x1}. 
(1) $\Rightarrow$ (2): Since $\psi$ is $\ZZ^n$-graded, $[\psi(\alpha): 1_z] \ne 0$ implies $\deg \alpha=\b0$. So, if $\alpha \ne 1_x$, then $\alpha \in \Lf(\EE_x)$. Since $\psi$ is clean, we have $\psi(\alpha) \in \Lf(\EE_z)$ and   $[\psi(\alpha): 1_z] = 0$.  
\end{proof}

\begin{thm}\label{clean base}
For $x, z \in P$ with $x \ge z$, and 
$\varphi \in [\gHom_S(\EE_x, \EE_z)]_\b0$ with $\varphi(1_x) \ne 0$, there is a $\ZZ^n$-graded automorphism $\sigma$ of $\EE_x$ such that $\varphi \circ \sigma$ is a clean map.  
\end{thm}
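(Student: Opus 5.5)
Let $\psi\colon \EE_x \to \EE_z$ be the clean map of Theorem~\ref{uniqueness}, scaled so that $\psi(1_x)=\varphi(1_x)$; this is possible because $\varphi(1_x)=c\cdot 1_z$ with $c\neq 0$ by Lemma~\ref{S_x -> S_z}, and $\psi(1_x)\neq 0$ by Lemma~\ref{1x1}. The difference $\delta:=\varphi-\psi$ satisfies $\delta(1_x)=0$, so $\delta|_{S_x}=0$. The goal is an automorphism $\sigma$ of $\EE_x$ with $\sigma|_{S_x}=\mathrm{id}$ and $\varphi\circ\sigma=\psi$. Then $\varphi\circ\sigma$ is clean, and Lemma~\ref{restriction to S_x} identifies it with $\psi$. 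Since $\EE_x$ is an indecomposable injective, a degree-$\b0$ endomorphism that is the identity on the essential submodule $S_x$ is automatically an automorphism: it is injective because its kernel meets $S_x$ trivially, and its image is then an injective submodule containing $S_x$, hence a direct summand and so all of $\EE_x$. So it suffices to construct an endomorphism $\sigma$ with $\sigma|_{S_x}=\mathrm{id}$ and $\varphi\sigma=\psi$.

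I will build $\sigma=\mathrm{id}+\tau$ with $\tau|_{S_x}=0$, which requires $\varphi\tau=\psi-\varphi=-\delta$. The key step is to lift $-\delta$ through $\varphi$. Both $\delta$ and $\tau$ vanish on $S_x$, so they factor through $M:=\EE_x/S_x$. I therefore need $\bar\tau\colon M\to \EE_x$ with $\varphi\circ\bar\tau=-\bar\delta$. In general $\varphi$ is not surjective onto everything that $\bar\delta$ hits, so I would work degreewise using the filtration $(\EE_x)^{\ge -m}$ and argue by induction on $m$. On $(\EE_x)^{\ge 0}=\EA$, both $\delta$ and $\tau$ are determined by $\EA$-linearity in the $t_i$-direction. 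Lemma~\ref{degree 0} reduces the relevant data to degree $\b0$, and by multiplying by Laurent monomials in $t_1,\dots,t_l$ the problem becomes one about the finite-dimensional pieces $[\EE_x]_{\b0}\cap(\EE_x)^{\ge -m}$.

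An alternative I would try first, since it avoids lifting arbitrary maps, is to change bases directly. For each basis monomial $\alpha\in[\Lf(\EE_x)]_{\b0}$, write $\varphi(\alpha)=c_\alpha 1_z+\gamma_\alpha$ with $\gamma_\alpha\in\Lf(\EE_z)$, and replace $\alpha$ by $\alpha':=\alpha-(c_\alpha/c)1_x$. Then $\varphi(\alpha')=\gamma_\alpha-(c_\alpha/c)\psi(\cdots)$ has no $1_z$-component. The difficulty is realizing this substitution as an $S$-module automorphism $\sigma$, that is, making $\sigma(\alpha)=\alpha-(c_\alpha/c)1_x$ compatible with multiplication by all $t_w$. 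To do this I would define $\sigma$ on the socle-like part $(0:_{\EE_x}\fm_{-x})$ and extend by injectivity. More precisely, I would use the *essential extension $S_x\subset \EE_x$ together with injectivity of $\EE_x$: first define a map on the $S$-submodule generated by $S_x$ and a suitable complement, and then extend.

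The main obstacle is this compatibility. The cleanness condition only concerns degrees $\ge\b0$, but $\sigma$ must be a module map on all of $\EE_x$. I expect to handle it by induction on $m$, the $-\deg_\ZZ$ of the $\ESx$-component. At each step I would correct $\sigma$ on $[(\EE_x)^{-m}]_{\b0}$ so that $\varphi\sigma(\alpha)$ has zero $1_z$-coefficient, using Lemma~\ref{t_m^-a} to see that only the $1_x$-direction matters. I would then extend the partial map to $\EE_x$ via injectivity, keeping $\sigma|_{S_x}=\mathrm{id}$. Finally, Lemma~\ref{degree 0} would give cleanness of $\varphi\circ\sigma$.
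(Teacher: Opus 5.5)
Your opening reduction is sound and matches the paper. A degree-$\b0$ endomorphism of $\EE_x$ that restricts to the identity on $S_x$ is an automorphism. By Lemma~\ref{degree 0} it suffices to get $[\varphi\sigma(\alpha):1_z]=0$ for every monomial $\alpha\in[\Lf(\EE_x)]_{\b0}$, because $[\EE_z]_{\b0}=K\cdot 1_z\oplus[\Lf(\EE_z)]_{\b0}$.

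The gap is that $\sigma$ itself, which is the whole content of the theorem, is never constructed.
\begin{itemize}
\item The lifting route has no mechanism. $\EE_x$ is injective, not projective, so nothing general makes $-\delta$ factor through $\varphi$. It does factor, but only as a consequence of the theorem.
\item In the base-change route you name $S$-linearity as the main obstacle and then only say you expect induction on $m$ to handle it. ``Correcting $\sigma$ on $[(\EE_x)^{-m}]_{\b0}$'' changes $\sigma$ on a $K$-subspace. You give no reason this comes from an $S$-linear map that leaves the values already fixed on lower levels untouched. Note that $\wt_w$ carries level $m$ into lower levels, and the $t_i$ tie degree $\b0$ to other degrees.
\item Defining $\sigma$ on $(0:_{\EE_x}\fm_{-x})=\EA$ and extending by injectivity gives no control over $\sigma$ on $\Lf(\EE_x)$.
\item Lemma~\ref{t_m^-a} concerns clean maps and says nothing about $\varphi$. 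What you actually need there is just $[\varphi(1_x):1_z]=c\neq 0$.
\end{itemize}

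The missing idea is that the level-$m$ corrections can be made $S$-linearly, with independent constants. Take $x\in[y_1\vee\cdots\vee y_r]$ and set $F_m:=\bigoplus_{0\le i\le m}(\EE_x)^{-i}$, an $S$-submodule.
\begin{itemize}
\item For $u\otimes v\in(\EE_x)^{-m}$ and $w\in P^*_{-x}$, the element $\wt_w(u\otimes v)=u\otimes t_w v$ lies in $F_{m-1}$. Also $t_i(u\otimes v)=t_i u\otimes v$, and $\wt_w$ kills $\EA\subset\EE_x$.
\item Fix a monomial $v$ with $\deg_\ZZ v=-m$ and $\deg v=(a_1,\dots,a_r,0,\dots,0)$. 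Put $t^{\deg v}:=\prod_{i=1}^r t_i^{a_i}$ and choose any $\lambda_v\in K$.
\item The rule $u\otimes v\mapsto\lambda_v\,u\,t^{\deg v}$, with $0$ on all other $v$, defines a degree-preserving $S$-linear map $\rho\colon F_m/F_{m-1}\to\EA\subset\EE_x$.
\item Extend $\sigma_{m-1}$ to some $\sigma'\colon F_m\to\EE_x$ by injectivity of $\EE_x$. Set $\sigma_m:=\sigma'+\rho\circ q$, where $q\colon F_m\to F_m/F_{m-1}$ is the quotient map. Then $\sigma_m$ agrees with $\sigma_{m-1}$ on $F_{m-1}$.
\item The degree-$\b0$ monomials of level $m$ are exactly $\alpha_v=t^{-\deg v}\otimes v$, and $\sigma_m(\alpha_v)=\sigma'(\alpha_v)+\lambda_v 1_x$. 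So $\lambda_v:=-[\varphi\sigma'(\alpha_v):1_z]/c$ kills the $1_z$-coefficient.
\end{itemize}
Starting from the identity on $F_0=\EA$, the union of the $\sigma_m$ is the required $\sigma$.

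With this supplied you have a correct proof, but it is not the paper's. The paper defines $\tau$ in closed form by $[\tau(\alpha):\beta]:=[\varphi(f_\beta\alpha):1_z]$, where $f_\beta=\prod_i t_i^{-b_i}\prod_w \wt_w^{b_w}$. It checks $S$-linearity by a duality computation, proves $\varphi=\psi\circ\tau$ for the clean $\psi$ with $\psi(1_x)=1_z$, and takes $\sigma=\tau^{-1}$. That gives an explicit $\sigma$. The inductive route is easier to verify but non-constructive, since it uses injectivity at every step.
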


Any $\ZZ^n$-graded automorphism $\sigma$ of $\EE_x$ satisfies 
$\sigma(1_x)= c \cdot 1_x$ for some $0 \ne  c \in K$. Hence, if $\varphi(1_x) = 0$ for $0 \ne \varphi \in [\gHom_S(\EE_x, \EE_z)]_\b0$,  $\varphi \circ \sigma$ cannot be clean for any automorphism $\sigma$ by Lemma~\ref{1x1}.

\begin{proof}
Let $\psi: \EE_x \to \EE_z$ be the clean map with $\psi(1_x)=1_z$. 
It suffices to show that there exists  a $\ZZ^n$-graded automorphism  $\tau$ of $\EE_x$ with $\varphi=\psi \circ \tau$. In fact, $\sigma =\tau^{-1}$ satisfies the desired property.  Since $\tau$ is an automorphism on $E_x$, $\wt_z \in S$ for $z \in P^*$ defined in \eqref{wt} is useful.    

As before, we may assume that $x \in[y_1 \vee \cdots \vee y_r]$ and $z \in[y_1 \vee \cdots \vee y_l]$ with $r \ge l$. Recall that the set of monomials of the form 
$$\beta =\prod_{i=1}^r t_i^{b_i} \otimes \prod_{z \in P^*_{-x}} t_z^{-b_z}$$
for $(b_1, \ldots, b_r) \in \ZZ^r$ and $(b_z)_{z \in P^*_{-x}} \in \NN^{P^*_{-x}}$ forms a $K$-basis of $\EE_x$. Set 
$$f_\beta:= \left(\prod_{i=1}^r t_i^{-b_i}\right)\cdot \left( \prod_{z \in P^*_{-x}} \wt_z^{b_z}\right) \in S[t_1^{-1}, \ldots, t_r^{-1}].$$

For monomials $\alpha, \beta \in \EE_x$, we construct a map $\tau : \EE_x \to \EE_x$ by 
$$[\tau(\alpha):\beta]:=[\varphi(f_\beta \cdot \alpha): 1_z].$$
Note that, in the $\ZZ^n$-grading, $\deg \alpha =\deg \beta$ if and only if $\deg (f_\beta \cdot \alpha) ={\bf 0}$. 
Hence $[\tau(\alpha):\beta] \ne 0$ implies $\deg \alpha =\deg \beta$, and  $\tau$ is $\ZZ^n$-graded. 
For all monomials $\alpha, \beta \in \EE_x$ and all $w \in P^*_{-x}$, we have 
$$
[\tau(\wt_w \alpha):\beta]
=[\varphi(f_\beta \cdot \wt_w\alpha): 1_z] 
=[\varphi(f_{\beta'} \cdot \alpha): 1_z] 
=[\tau(\alpha):\beta'],$$
where 
$$\beta' =\prod_{i=1}^r t_i^{b_i} \otimes t_w^{-1} \prod_{z \in P^*_{-x}} t_z^{-b'_z}.$$
Since $\beta' $ is only monomial with $\beta=\wt_w \beta'$, we have $\tau(\wt_w \alpha)=\wt_w \tau(\alpha)$. 
We can show that $\tau(t_i \alpha)=t_i \tau(\alpha)$ for $1 \le i \le r$ in a similar way, while we set $\beta'= \wt_i^{-1} \beta$ this time.
Summing up, we have shown that $\tau$ is $S$-linear. 

Next we will show that $\tau$ is injective. For contradiction, assume that $\Ker \tau \ne 0$.  By Proposition~\ref{ess ext}, we have $\Ker \tau \cap S_x \ne 0$. However, since $\tau(1_x) \ne 0$ by the construction, $\tau|_{S_x}$ is injetive. This is a contradiction, and $\tau$ is an injection. Since $\tau$ is $\ZZ^n$-graded and $\EE_x$ is *injective, $\tau$ splits and $\tau(\EE_x)$ is a direct summand of $\EE_x$.   Since $\EE_x$ is indecomposable, $\tau$ is isomorphism.  

Finally, we will show that $\varphi=\psi\circ \tau$.  We have 
\begin{equation}\label{1 otimes 1}
[\psi \circ \tau(\alpha): 1_z] = [\tau(\alpha): 1_x] =[\varphi(\alpha): 1_z]
\end{equation}
for all monomials $\alpha \in \EE_x$, where the first (resp. the second) equality follows from  Lemma~\ref{t_m^-a} (resp. the construction of $\tau$). 
Set $\varphi' :=  \varphi-\psi\tau  \in [\gHom_S(\EE_x, \EE_z)]_\b0$. By \eqref{1 otimes 1}, we have $[\varphi'(\alpha) : 1_z] = 0$ for all $\alpha \in \EE_x$. Assume that $\varphi' \ne 0$. 
Then $\Im \varphi' \cap S_z \ne 0$. Since $\varphi'$ is $K[t_1^{\pm 1}, \ldots, t_l^{\pm 1}]$-linear, there is a monomial $\alpha \in \EE_x$ such that $[\varphi'(\alpha) : 1_z] \ne 0$. This is a contradiction. 
\end{proof}

\section{Graded dualizing complexes}
For the foundational theory and basic properties of dualizing complexes, we refer the reader to \cite{Ye}.

Set $N:=\dim S =\# P^*$. 
For a prime ideal $\fp \subset S$, let $E(S/\fp)$ be the injective envelope of $S/\fp$. 
We have the cochain complex $D_S^\bullet$ with 
$$0 \too D_S^{-N}\too D_S^{-N+1}\too \cdots \too D_S^{-1}\too D_S^0\too 0,$$ 
$$D_S^{-i}= \bigoplus_{\substack{\fp \in \Spec S \\ \hght \fp =i}} E_S(S/\fp)$$
satisfying $H^{-N}(D_S^\bullet) \cong S$ and $H^i(D_S^\bullet) =0$ for all $i \ne -N$. That is, $D_S^\bullet$ is a translation of a minimal injective resolution of $S$, and 
quasi-isomorphic to the {\it normalized dualizing complex} of $S$.  

Set $\bw:=\sum_{x \in P^*} \deg t_x \in \NN^n$. Using the Koszul resolution of $K=S/\fm$ over $S$, we see that $\gExt_S^N(K,S(-\bw))\cong K$. Let $\DB_S$  be the minimal injective resolution of $S(-\bw)$ in $\*Mod S$ with the same translation as $D_S^\bullet$, that is,  $H^{-N}(\DB_S) \cong S(-\bw)$ and $H^i(\DB_S) =0$ for all $i \ne -N$ (so if we forget the grading, $\DB_S$ is quasi-isomorphic to $D_S^\bullet$). 
Then $\DB_S$ is quasi-isomorphic to the normalized $\ZZ^n$-graded dualizing complex of $S$, and of the form 
$$0 \too \gD_S^{-N}\too \gD_S^{-N+1}\too \cdots \too \gD_S^{-1}\too \gD_S^0\too 0,$$ 
\begin{equation}\label{gD_S}
\gD_S^{-i}= \bigoplus_{\substack{\fp:\text{$\ZZ^n$-graded prime} \\ \hght \fp =i}} (\EE_S(S/\fp))(-\ba_\fp)
\end{equation}
for some $\ba_\fp \in \ZZ^n$. Since $\gExt_S^N(K,S(-\bw))\cong K$, we have $\ba_\fm=\b0$, that is, $\gD_S^0=\EE_{\widehat{0}}$. This is the reason we use $S(-\bw)$ instead of $S$ itself. 
It is a natural question  whether we can take $\ba_\fp=\b0$ for all $\ZZ^n$-graded prime $\fp$. Proposition~\ref{no shift} bellow is a partial answer.  


This is the graded version of \cite[Theorem 3.9]{S}  for local rings. The proof in the graded setting is essentially the same.

\begin{lem}\label{Hom(A_P,-)}
The cochain complex $\gHom_S(A_P, \DB_S)$ is quasi-isomorphic to the normalized $\ZZ^n$-graded dualizing complex of $A_P$.     
\end{lem}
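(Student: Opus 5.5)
The plan is to follow the proof of \cite[Theorem 3.9]{S} (the local version) and transport it to $\*Mod S$. Everything rests on the standard fact that, for a finite homomorphism $S \to A$ of noetherian rings and a dualizing complex $D$ of $S$, the complex $\mathbf{R}\Hom_S(A, D)$ is a dualizing complex of $A$ (see \cite{Ye}). We will check this in the $\ZZ^n$-graded category, and then confirm that the grading shift and the cohomological translation come out so that the result is \emph{normalized}.

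\textbf{Step 1.} Since $\DB_S$ is a bounded complex of injectives in $\*Mod S$, $\gHom_S(A_P,\DB_S)$ computes $\mathbf{R}\gHom_S(A_P,\DB_S)$. Because $A_P$ is finitely generated, forgetting the grading gives $\mathbf{R}\Hom_S(A_P, D_S^\bullet)$.

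\textbf{Step 2.} We verify the dualizing conditions over $A_P$.

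(a) The cohomology is finitely generated: $H^i(\gHom_S(A_P,\DB_S)) \cong \gExt_S^{i+N}(A_P, S(-\bw))$.

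(b) Finite injective dimension in $\*Mod A_P$: for each graded injective $S$-module $E$, the module $\gHom_S(A_P,E)$ is graded injective over $A_P$, by the adjunction $\gHom_{A_P}(-,\gHom_S(A_P,E))\cong \gHom_S(-,E)$. The complex is therefore bounded and consists of graded injectives.

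(c) The natural map $A_P \to \mathbf{R}\gHom_{A_P}(\gHom_S(A_P,\DB_S),\gHom_S(A_P,\DB_S))$ is a quasi-isomorphism. By adjunction this reduces to $A_P \cong \mathbf{R}\gHom_S(\mathbf{R}\gHom_S(A_P,\DB_S),\DB_S)$, which is the graded duality for finitely generated modules over $S$, since $\DB_S$ is a graded dualizing complex of the regular ring $S$.

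Each of these is a routine transcription of the ungraded argument, which is why the paper can say the proof is essentially the same. Alternatively, one could invoke the general graded theory in \cite{Ye}.

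\textbf{Step 3.} We check normalization. In \eqref{gD_S} the $\fm$-component is $\gD_S^0=\EE_{\hat 0}$ with no shift, which is exactly what the twist by $-\bw$ arranged. By Proposition~\ref{0:I_P}, $\gHom_S(A_P,\EE_{\hat 0})$ has degree-$\b0$ socle $K$ in cohomological degree $0$. More generally, only the summands $\EE_S(S/\fp)(-\ba_\fp)$ with $\fp\supset I_P$, i.e.\ $\fp=\fp_x$, survive. Their heights are $N-\rank x$, so the term in degree $-i$ is supported on the $\fp_x$ with $\dim S_x = \rank x = i$. This is precisely the normalization condition: the term in degree $-\dim A_P/\fq$ corresponds to $\fq$, and the residue field $K$ sits in degree $0$ without shift.

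The main obstacle I expect is step (c), together with the precise normalization of the $\ZZ^n$-grading. Because $S$ is not standard graded and $\dim_K S_\ba$ may be larger than $1$, one must make sure the graded Matlis/local duality used in \cite{Ye} (or \cite{BS}) applies to this nonstandard $\ZZ^n$-graded setting. I would handle this by noting, as the paper already does for \cite{GW}, that those results only require a noetherian $\ZZ^n$-graded ring with $S_{\b0}=K$ and $S_\ba=0$ unless $\ba\ge\b0$, both of which hold here.
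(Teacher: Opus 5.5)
Your proposal is correct and follows essentially the paper's route. The paper gives no argument beyond citing the local result \cite[Theorem 3.9]{S} and saying the graded proof is essentially the same, and your Steps 1--3 carry out exactly that transcription (graded injectivity, finitely generated cohomology and the homothety map via adjunction, then normalization through the $\fm$-component). The one point worth stating explicitly in Step 3 is that you rely on uniqueness of $\ZZ^n$-graded dualizing complexes up to translation and degree shift. Given that uniqueness, $\gHom_{A_P}(K,\gHom_S(A_P,\DB_S))\cong\gHom_S(K,\DB_S)\cong K$, concentrated in cohomological degree $0$ and internal degree $\b0$, pins down the normalization.
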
%

The $\ZZ^n$-graded dualizing complex of $A_P$ is described in \cite{Y11}. 
For $x \in P$, let $\bfp_x$ denote the  prime ideal of $A_P=S/I_P$ which is the image of $\fp_x \supset I_P$. Any $\ZZ^n$-graded prime ideal of $A_P$ coincides with $\bfp_x$ for some $x \in P$. Let $\EE_{A_p}(S_x)$ be the injective envelope of $S/\bfp_x \cong S_x$ in the category of $\ZZ^n$-graded $A_P$-modules. Note that $(0:_{\EE_x} I_P)\cong \EE_{A_p}(S_x)$, and hence $[\EE_{A_p}(S_x)]_{\ge \b0} \cong S_x$. 
We regard $S_x$ as a submodule of $\EE_{A_p}(S_x)$ in this way. Note that $1_x \in \EE_{A_P}(S_x)$. 

Set $d:=\dim A_P$. As shown in \cite[p.2234]{Y11}, 
the normalized $\ZZ^n$-graded dualizing complex of $A_P$ is quasi-isomorphic to the following complex:
$$\DB_{A_P}: 0 \too \gD_{A_P}^{-d} \too \gD_{A_P}^{-d+1} \too \cdots \too \gD_{A_P}^{-1} \too \gD_{A_P}^0 \too 0$$ 
$$\gD^{-i}_{A_P}=\bigoplus_{\substack{x \in P \\ \rank x=i}} \EE_{A_p}(S_x)$$
with the differential map $\partial^{-i}: \gD_{A_p}^{-i} \to  \gD_{A_p}^{-i+1}$ given by 
\begin{equation}\label{differential}
\gD_{A_p}^{-i} \supset \EE_{A_P}(S_x) \ni 1_x \longmapsto \sum_{\substack{x' \in P\\ \text{$x$ covers $x'$}}}\pm 1_{x'} \in\bigoplus_{\substack{z \in P \\ \rank z=i-1}} \gE_{A_p}(S_z) = \gD_{A_p}^{-i+1}.
\end{equation}
Here the sign $\pm$ is given by an incidence function associated with the regular CW complex associated with $P$. 

\begin{rem}
As shown in \cite[Theorem~1.1]{Y11}, the subcomplex $I^\bullet_{A_P}:= [\gD_{A_p}^\bullet]_{\ge \b0}$ is quasi-isomorphic to $\gD_{A_p}^\bullet$ itself.   Clearly,   $I^\bullet_{A_P}$ is of the form 
$$0 \too I_{A_P}^{-d} \too I_{A_P}^{-d+1} \too \cdots \too I_{A_P}^{-1} \too I_{A_P}^0 \too 0$$ with
$$I^{-i}_{A_P}=\bigoplus_{\substack{x \in P \\ \rank x=i}} S_x.$$
\end{rem}

\begin{prop}\label{no shift}
In the situation of \eqref{gD_S}, we can take $\ba_{\fp_x}=\b0$ for all $x \in P$.      
\end{prop}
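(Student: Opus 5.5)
We compare $\gHom_S(A_P,\DB_S)$ with the explicit complex $\DB_{A_P}$ of \cite{Y11}, using Lemma~\ref{Hom(A_P,-)} and Proposition~\ref{0:I_P}. Since $\EE_S(S/\fp)$ has associated prime $\fp$, we have $\gHom_S(A_P,\EE_S(S/\fp)(-\ba_\fp))=0$ unless $\fp\supset I_P$, i.e.\ $\fp=\fp_x$ for some $x\in P$. For $\fp=\fp_x$ this Hom is $(0:_{\EE_x}I_P)(-\ba_{\fp_x})\cong \EE_{A_P}(S_x)(-\ba_{\fp_x})$. The height of $\fp_x$ is $N-\rank x$, because $S/\fp_x\cong S_x$ is a polynomial ring in $\rank x$ variables. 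Hence
$$\gHom_S(A_P,\DB_S)^{-i}\cong\bigoplus_{\rank x=i}\EE_{A_P}(S_x)(-\ba_{\fp_x}),$$
which is a complex of $*$injective $A_P$-modules. By Lemma~\ref{Hom(A_P,-)} it is quasi-isomorphic to the normalized $\ZZ^n$-graded dualizing complex, and so to $\DB_{A_P}$.

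Next we use the uniqueness of minimal $*$injective complexes. If $\gHom_S(A_P,\DB_S)$ is minimal, meaning its differential has no isomorphism components between indecomposable summands, then it is isomorphic (not merely quasi-isomorphic) to $\DB_{A_P}$. To check minimality, note that the differential maps the summand for $x$ only into summands for $x'$ with $\fp_{x'}\supsetneq\fp_x$, i.e.\ $x'<x$. Any isomorphism component would have to join two summands attached to the same prime, and these sit in the same cohomological degree. So the complex is minimal, because all summands in a fixed cohomological degree have primes of the same height. Matching indecomposable summands then gives $\EE_{A_P}(S_x)(-\ba_{\fp_x})\cong\EE_{A_P}(S_x)$ for every $x$.

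The main obstacle is passing from this isomorphism to the statement that $\ba_{\fp_x}$ can be taken to be $\b0$. The shift is determined only up to the shifts $\bb$ with $\EE_x\cong\EE_x(\bb)$. Remark~\ref{E_x rem}(4) says these are exactly the $\bb$ with $b_i=0$ for the indices $i$ with $y_i\not\le x$. The same holds for $\EE_{A_P}(S_x)$: by Proposition~\ref{0:I_P}, $[\EE_{A_P}(S_x)]_{\ge\b0}=S_x$, and its degrees are supported where the coordinates outside $\{i\mid y_i\le x\}$ vanish. Since $\EE_{A_P}(S_x)(-\ba)\cong\EE_{A_P}(S_x)$ forces $\ba$ to be such an allowable shift, and the same shift also gives $\EE_x(-\ba)\cong\EE_x$, we may replace $\ba_{\fp_x}$ by $\b0$ in \eqref{gD_S}. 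As a sanity check, this agrees with $\ba_\fm=\b0$ obtained earlier from $\gExt^N_S(K,S(-\bw))\cong K$. The comparison of degree supports via Remark~\ref{E_x rem}(4) is the point I would verify most carefully.
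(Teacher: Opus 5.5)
Your argument is correct and is essentially the paper's own proof. The paper compresses it to one line: the claim follows from Lemma~\ref{Hom(A_P,-)} and the shift-free description of $\DB_{A_P}$. You have supplied the steps it leaves implicit: identifying $\gHom_S(A_P,\EE_x(-\ba))$ with $\EE_{A_P}(S_x)(-\ba)$, checking that both complexes are minimal, and using that minimal complexes of *injectives are unique up to isomorphism.

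The one loose point is the degree-support justification in your last step. The support of the whole module $\EE_{A_P}(S_x)$ is in general not confined to degrees whose coordinates outside $\{i \mid y_i \le x\}$ vanish; only its part in degrees $\ge \mathbf{0}$ is. You can bypass this entirely. Since $\EE_{A_P}(S_x)(-\ba)$ is a *essential extension of $S_x(-\ba)$ also as an $S$-module, its *injective envelope over $S$ is $\EE_x(-\ba)$. So the isomorphism $\EE_{A_P}(S_x)(-\ba)\cong\EE_{A_P}(S_x)$ gives $\EE_x(-\ba)\cong\EE_x$ directly.
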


\begin{proof}
The assertion follows Lemma~\ref{Hom(A_P,-)} and the above description of $\DB_{A_P}$. 
\end{proof}

Let $\Gamma_{I_P}: \*Mod S \to \*Mod S$ be the local cohomology functor with supports in $I_P$, that is,
$$
\Gamma_{I_P}(\EE(S/\fp))=
\begin{cases}
\EE(S/\fp) & \text{$\fp \supset I_P$, equivalently, $\fp=\fp_x$ for $\exists x \in P$},\\
0 & \text{otherwise} 
\end{cases}
$$
for a $\ZZ^n$-graded prime ideal $\fp$. 
The $(i-N)$-th cohomology $H^{i-N}(\Gamma^\bullet)$ of the cochain complex $\Gamma^\bullet :=\Gamma_{I_P}(\DB_S)$  
is the $i$-th local cohomology $H_{I_P}^i(S(-\bw)) \, (\cong H_{I_P}^i(S)(-\bw))$.  
By Proposition~\ref{no shift}, $\Gamma^\bullet$ is of the form 
$$0 \too \Gamma^{-d}\too \Gamma^{-d+1}\too \cdots \too \Gamma^{-1}\too \Gamma^0\too 0$$ 
with 
$$\Gamma^{-i}=\bigoplus_{\substack{x \in P \\\rank x=i}} \EE_x.$$

\begin{dfn}\label{clean map def2}
For a module $E=\bigoplus_{x \in P} \EE_x^{l_x} \in \*Mod S$ with $l_x \in \NN$ (we fix a basis for each direct summand $\EE_x$), set 
$$\Lf(E):=\bigoplus_{x \in P} \Lf(\EE_x)^{l_x}.$$

Take $E=\bigoplus_{x \in P} \EE_x^{l_x}$ and $F =\bigoplus_{x \in P} \EE_x^{m_x}$ with $l_x, m_x \in \NN$. We say $\psi \in [\gHom_S(E,F)]_{\b0}$ is {\it clean}, if $\psi(\Lf(E)) \subset \Lf(F)$.  
\end{dfn}

\begin{thm}\label{diff are clean}
We can choose bases of $\EE_x$ for all $x \in P$ so that the differential maps $\partial^{-i}: \Gamma^{-i}\to \Gamma^{-i+1}$ 
of  $\Gamma_{I_P}(\DB_S)$ is clean for all $i$. 
\end{thm}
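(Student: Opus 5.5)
Since cleanness of a map $\bigoplus \EE_x^{l_x}\to\bigoplus \EE_x^{m_x}$ is tested componentwise, $\partial^{-i}$ is clean exactly when every component $\partial_{x,x'}:\EE_x\to\EE_{x'}$ ($\rank x=i$, $\rank x'=i-1$) is clean. The plan is to fix bases one rank at a time, going downward from rank $0$ upward in the complex, i.e.\ by induction on $\rank x$, changing the basis of $\EE_x$ by an automorphism via Theorem~\ref{clean base}. First I identify the components. Apply $\gHom_S(A_P,-)$ to $\Gamma^\bullet$: by Lemma~\ref{Hom(A_P,-)}, Proposition~\ref{no shift} and the description \eqref{differential}, together with $(0:_{\EE_x}I_P)=\EE_{A_P}(S_x)$ and Proposition~\ref{0:I_P}, the component $\partial_{x,x'}$ sends $1_x$ to $c_{x,x'}1_{x'}$ with $c_{x,x'}=\pm1\neq 0$ when $x$ covers $x'$, and $\partial_{x,x'}(1_x)=0$ otherwise (also $\partial_{x,x'}=0$ unless $x\ge x'$). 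A point to check: the complex $\gHom_S(A_P,\Gamma^\bullet)$ is a minimal complex whose terms are $\EE_{A_P}(S_x)$, and any such complex quasi-isomorphic to $\DB_{A_P}$ is isomorphic to it; composing with automorphisms of the summands (each fixes $1_x$ up to a nonzero scalar) I may assume the restriction to $1_x$ is as in \eqref{differential}. This is the step I expect to be most delicate.

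Base case: rank $\le 1$. For $\rank x=1$ the component $\EE_{y_i}\to\EE_{\hat0}$ is automatically clean by Remark~\ref{rank=1}, with the bases fixed in \S2. Inductive step: suppose bases of all $\EE_{x'}$ with $\rank x'<i$ are fixed so that $\partial^{-j}$ is clean for $j<i$. Fix $x$ of rank $i$ and a cover $x'_0$ of $x$ downward, i.e.\ $x$ covers $x'_0$. Since $\partial_{x,x'_0}(1_x)\ne 0$, Theorem~\ref{clean base} gives an automorphism $\sigma$ of $\EE_x$ with $\partial_{x,x'_0}\circ\sigma$ clean; replace the basis of $\EE_x$ by its image under $\sigma$ (equivalently, precompose). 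Now I must show that, with this new basis, $\partial_{x,x'}$ is clean for every other $x'$ of rank $i-1$.

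For $x'$ not covered by $x$ we have $\partial_{x,x'}(1_x)=0$; I want $\partial_{x,x'}=0$. Use $\partial^{-i+1}\circ\partial^{-i}=0$ together with uniqueness: consider for each $w$ of rank $i-2$ the relation $\sum_{x'}\partial_{x',w}\partial_{x,x'}=0$. The cleaner route: for covered $x'$, the map $\partial_{x,x'}$ and the clean map $\psi_{x,x'}$ of Theorem~\ref{uniqueness} scaled to $c_{x,x'}1_{x'}$ agree on $1_x$; I will show they are equal by proving $\partial_{x,x'}$ is clean via Lemma~\ref{clean basic}(2). Take $w<x'$ of rank $i-2$ with $x'$ and $x'_0$ both covering... in general choose a chain; by the diamond property of $P$ (intervals are boolean) the interval $[w,x]$ has exactly two middle elements $x'$ and $x''$. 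The relation $\partial_{x',w}\partial_{x,x'}+\partial_{x'',w}\partial_{x,x''}=0$ restricted appropriately (other summands vanish modulo non-clean parts, to be verified using that noncovering components kill $1$) lets me propagate cleanness from $x''$ to $x'$: $\partial_{x',w}\circ\partial_{x,x'}=-\partial_{x'',w}\circ\partial_{x,x''}$ is clean by Lemma~\ref{clean basic}(1), and $\partial_{x',w}\ne0$ clean by induction, so Lemma~\ref{clean basic}(2) gives $\partial_{x,x'}$ clean. Connectivity of the set of coatoms of the boolean lattice $[\hat0,x]$ via such diamonds lets me reach all $x'$ from $x'_0$. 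For rank $i=2$, $w=\hat0$ and the same works. Finally, noncovering components with $x\not\ge x'$ vanish, and those with $x\ge x'$ but different rank cannot occur, so all components are clean; the main obstacle remains controlling the extra summands in the composite relation and making the sign normalization rigorous.
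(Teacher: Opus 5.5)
Your proposal is correct and follows the same route as the paper. The paper also inducts on rank, uses Theorem~\ref{clean base} to make one component $\EE_x\to\EE_{z}$ clean, and then transfers cleanness to every other lower cover $z'$ via the two-term relation from $\partial^{-i+1}\partial^{-i}=0$ on a diamond $w<z,z'<x$ together with Lemma~\ref{clean basic}. The points you flag as delicate resolve easily. The extra summands vanish outright, since $[\gHom_S(\EE_x,\EE_{x'})]_{\b0}=0$ unless $x\ge x'$, so only the two middle elements of the boolean interval $[w,x]$ contribute. Any two coatoms of $[\hat0,x]$ already form such a diamond, so no chain argument is needed. Finally, only $\partial_{x,z}(1_x)\neq 0$ is used, not the exact sign.
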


\begin{proof}
Throughout this proof, $x,z \in P$ are elements such that  $\rank x=i$ and $x$ covers $z$. Let $\iota_x: \EE_x \to \Gamma^{-i}$ be  the injection, and  $\pi_z: \Gamma^{-i+1} \to \EE_z$ the projection of the direct sums. 
(The symbols $\iota_-$ and $\pi_-$ will be used in this way.) 
It suffices to show that we can choose bases so that 
$\pi_z \partial^{-i}\iota_x: E_x \to E_z$ is clean. 
By Lemma~\ref{Hom(A_P,-)} and \eqref{differential}, we have $\pi_z \partial^{-i}\iota_x(1_x)=\pm 1_z$.

We construct the basis of $\EE_x$ inductively on $i=\rank x$. 
If $\rank x=0$ (equivalently, $x=\widehat{0}$), there is nothing to do. 
If $\rank x=1$ (and hence $z=\widehat{0}$), $\pi_z \partial^{-1} \iota_x$ is automatically clean by Remark~\ref{rank=1}. Suppose that we have chosen the bases of $\Gamma^{-i+1}, \Gamma^{-i+2}, \ldots$ satisfying the desired property. Take $x \in P$ with $\rank x=i$. If $z \in P$ is covered by $x$, we can choose the basis $\EE_x$ so that $\pi_z \partial^{-i}\iota_x$ is clean by Theorem~\ref{clean base}. Since $P$ is simplicial, if $z' \in P$ is  also covered by $x$, then there is $w \in P$ covered by both $z$ and $z'$. 
Moreover, if $z'' \in P$ satisfies $w < z'' <x$, then either $z''=z$ or $z''=z'$ holds. 
Since $\partial^{-i+1}\partial^{-i}=0$, we have $$(\pi_w\partial^{-i+1}\iota_z)\circ (\pi_z\partial^{-i}\iota_x)+(\pi_w\partial^{-i+1}\iota_{z'})\circ (\pi_{z'}\partial^{-i}\iota_x)=0.$$  

By the assumption, $(\pi_w\partial^{-i+1}\iota_z)$, $(\pi_w\partial^{-i+1}\iota_{z'})$, and  $(\pi_z\partial^{-i}\iota_x)$ are clean. By Lemma~\ref{clean basic} (1), the composition $(\pi_w\partial^{-i+1}\iota_z) \circ (\pi_z\partial^{-i}\iota_x)$ is clean. 
Hence $(\pi_w\partial^{-i+1}\iota_{z'})\circ (\pi_{z'}\partial^{-i}\iota_x)$, which equals $-(\pi_w\partial^{-i+1}\iota_z)\circ (\pi_z\partial^{-i}\iota_x)$, is also. 
Since $\pi_w\partial^{-i+1}\iota_{z'} \ne0$, 
$\pi_{z'}\partial^{-i}\iota_x$ is clean by Lemma~\ref{clean basic} (2). Choosing bases of $\EE_{x'}$ for all $x' \in P$ with $\rank x'=i$ in this way, we have that $\partial^{-i}$ is clean (with respect to this bases). 
\end{proof}

\section*{Acknowledgments}
 We are deeply grateful to Professor Mitsuyasu Hashimoto for his valuable comments. We appreciate the early discussions with Professor Luis Núñez-Betancourt. 
We also thank Dr. Xin Ren  for his helpful support in preparing this manuscript. 
The first author is supported by JSPS KAKENHI Grant Number 22K03258. The second author is supported by JSPS KAKENHI Grant Numbers 22K03258 and 25K06928.

\end{document}